 \numberwithin{equation}{section}
\newcommand{\su}{{\rm  supp\,\, }}
\newcommand{\ve}{\mbox{\scriptsize \rm{V}}}
\newcommand{\es}{\mbox{\scriptsize \rm{S}}}
\newcommand{\ce}{\mbox{\scriptsize \rm{C}}}
\newcommand{\p}{\mbox{\scriptsize $\rm{\Pi}$}}
\newcommand{\pj}{{\mathrm{\Pi}}}
\newcommand{\ddr}{${\cal D}'^{ \{ M_{p } \} }$}
\newcommand{\md}{{\cal D}^{\{M_{p }\}}}
\newcommand{\mdd}{{\cal D}'^{\{M_{p }\}}}
\newcommand{\me}{{\cal E}^{\{M_{p}\}}}
\newtheorem{lemma}{Lemma}
\newtheorem{remark}{Remark}
\newtheorem{proposition}{Proposition}
\newtheorem{theorem}{Theorem}
\renewcommand{\(}{\left(}
\renewcommand{\)}{\right)}
\begin{document}

 \title {Convolution of Roumieu ultradistributions in sequential approach}

 \author{Svetlana Mincheva-Kami\'nska}


\maketitle

 {\hspace{-.6cm}\textsl{Institute of Mathematics,
 Faculty of Mathematics and Natural Sciences, \\
\hspace{.3cm} University of Rzesz\'ow, \\
\hspace{.3cm} Prof. Pigonia 1, 35-310 Rzesz\'ow, Poland} \\
 \hspace{.3cm} \textsl{email}: \emph{minczewa@ur.edu.pl}

\vspace{1.2cm}

 \begin{abstract}
  We consider several general sequential conditions for convolvability
 of two Roumieu ultradistributions on $\mathbb{R}^d$  in the space $\mathcal{D}'^{\{M_p\}}$
 and prove that they are equivalent to the convolvability of these ultradistributions
 in the sense of Pilipovi\'{c} and Prangoski. The discussed conditions, based on two classes
 ${\mathbb U}^{\{M_p\}}$ and $\overline{\mathbb U}^{\{M_p\}}$
 of approximate units and corresponding sequential conditions of integrability of Roumieu ultradistributions,
 are analogous to the known convolvability conditions in the space $\mathcal{D}'$ of distributions
 and in the space $\mathcal{D}'^{(M_p)}$ of ultradistributions of Beurling type.

Moreover, the following  property of the convolution and ultradifferential operator $P(D)$
 of class $\{M_p\}$ is proved: if $S, T \in \mdd(\mathbb{R}^d)$ are convolvable, then
 \[
 P(D)(S\ast T) = (P(D)S)\ast T = S\ast(P(D)T).
 \]
  \\

 \textsl{Keywords}: {Ultradistributions, Convolution of ultradistributions, Approximate unit,
Ultradifferential operator.}

 \textsl{MSC}: {46F05 46F10, 46E10.}

 \end{abstract}

 \section{Introduction}\label{sec1}

Deep investigations of the convolution of two ultradistributions of Roumieu type
(that we call shorter Roumieu ultradistributions)
in the non-quasianalytic case were carried out via $\varepsilon$-tensor product
by Pilipovi\'{c} and Prangoski in \cite{PP} and,
with important
improvements, by Dimovski, Pilipovi\'{c}, Prangoski and Vindas in \cite{DPPV}.
The authors gave there general functional definitions and proved fundamental results on convolvability
and the convolution of Roumieu ultradistributions in a way analogous to the known general approaches
of Chevalley and Schwartz in case of distributions.
For other aspects of the theory
see e. g.  \cite{CKP,DPV,DPrV,PP15,VV}. See also the recent article \cite{PPV} for results
concerning the quasianalytic case.

The aim of this paper is to discuss sequential conditions playing a similar role in the study of the convolution
of Roumieu ultradistributions to those used in the sequential theories of the convolution of distributions
(see \cite{VLA1}, \cite{DiVo}, \cite{Kam} and \cite{M-K}) and ultradistributions of Beurling type (see \cite{KKP}, \cite{KPP} and \cite{CKP}).
The conditions are based on two types of $\mathfrak{R}$-approximate units (Definition\,2
and Definition\,3),
being the counterparts of the approximate units in the sense of Dierolf and Voigt (see \cite{DiVo}).
The respective classes ${\mathbb U}^{\{M_p\}}$ and $\overline{\mathbb U}^{\{M_p\}}$
 of $\mathfrak{R}$-approximate units are used in sequential characterization of integrable Roumieu ultradistributions (\cite{SMK}),
 analogous to those proved by Pilipovi\'{c} in \cite{Pil91} in case of integrable ultradistributions of Beurling type.
As a consequence, we give
several sequential definitions of the convolution of Roumieu ultradistributions (Definition\,6).
We prove in Theorem\,\ref{thm2}, that all our sequential definitions of the convolution of Roumieu ultradistributions
 are equivalent to those given in \cite{PP} and \cite{DPPV}.

An important application of the notion of $\mathfrak{R}$-approximate units is presented in the proof of Theorem\,\ref{thm3},
describing a non-trivial property of the convolution of Roumieu ultradistributions and ultradifferential
operators of the class $\{M_p\}$.

 It is worth to recall that Pilipovi\'{c} in \cite{Pil91} used a different but analogous
 class of approximate units to prove the same property in case of the convolution of ultradistributions of Beurling type.
 Our proof of Theorem\,\ref{thm3} is based on similar ideas but discussion concerning the class
 $\mathfrak{R}$ plays an essential role in our case.



 \section{Preliminaries}

    We consider complex-valued $\mathcal{C}^{\infty}$-functions and Roumieu ultradistributions defined on $\mathbb{R}^d$
    (or on an open subset of $\mathbb{R}^d$) using the standard multi-dimensional notation
    in $\mathbb{R}^d$.

  To mark the dimension of $\mathbb{R}^{d}$, which is essential in some situations, we denote
 the considered spaces of test functions and the corresponding spaces
 of Roumieu ultradistributions simply by adding the index $d$
 at the end of the respective symbol. Moreover, if necessary, the constant function $1$ on
 $\mathbb{R}^{d}$ will be denoted by $1_d$ and the value of $T\in{\cal{D}'}^{\{M_p\}}_d$
 on $\varphi\in{\cal{D}}^{\{M_p\}}_d$ by $\langle T, \varphi\rangle_d$.

\smallskip
The spaces of test functions and Roumieu ultradistributions are defined by a given
sequence
$(M_{p})_{p\in\mathbb{N}_0}$ of positive numbers. Usually some of the following conditions
are imposed on the sequence $(M_{p})$:

\medskip
 \noindent
 (M.1)\ (logarithmic convexity)

 \quad \qquad $M^{2}_{p} \leq M_{p-1} M_{p+1},\qquad p \in \mathbb{N};$

 \medskip
 \noindent
 (M.2)\ (stability under ultradifferential operator)

 \quad \qquad $M_{p}\leq AH^{p}M_qM_{p-q},\qquad p,\,q\in \mathbb{N}_0,\; q\leq p;$

 \medskip
 \noindent
 (M.2') \ (stability under differential operator)

 \quad  \ \quad $M_{p}\leq AH^{p}M_{p-1},\qquad p\in \mathbb{N};$

 \medskip
 \noindent
 (M.3)\ (strong non-quasi-analyticity)

 \quad \qquad $\sum^{\infty}_{p=q+1} M_{p-1}M_{p}^{-1} \leq Aq M_{q}M_{q+1}^{-1},\quad q \in \mathbb{N};$

 \medskip
 \noindent
 (M.3') \ (non-quasi-analyticity)

 \quad  \ \qquad $\sum^{\infty}_{p=1} M_{p-1}M_{p}^{-1} < \infty,$

 \medskip
 \noindent
 for certain constants $A>0$ and $H>0$. We can and will assume that $H\geq1$.

 Clearly, conditions (M.2') and (M.3') are particular cases of conditions (M.2) and (M.3),
 respectively.

 For simplicity, we will assume in the whole paper that the sequence $(M_{p})$ satisfies the three
 conditions (M.1), (M.2) and (M.3), not discussing which of them can be weakened or omitted
 in the formulations of presented theorems.

It follows, by induction, from (M.1) that
$M_p\cdot M_q \leq M_0 M_{p+q}$ for $p, q\in \mathbb{N}_0$ (see \cite{SMK}).  Under the assumption that $M_0 = 1$, which we adopt hereinafter
for simplicity,
the last inequality admits the form:
 \begin{equation}\label{Mpq}
   M_p\cdot M_q \leq M_{p+q}, \qquad p, q\in \mathbb{N}_0.
     \end{equation}

 It will be convenient to extend the sequence $(M_{p})_{p\in\mathbb{N}_0}$ to
 ($M_k)_{k\in\mathbb{N}_0^d}$ by means of the formula:
 \[
  M_k := M_{k_1+\ldots+k_d},\qquad k = (k_1,\ldots,k_d)\in \mathbb{N}_0^d.
 \]
Due to the extended notation we immediately get the extended version of inequality (\ref{Mpq}):
 \begin{equation}\label{Mkjd}
   M_j\cdot M_k \leq M_{j+k}, \qquad j, k\in \mathbb{N}^d_0.
     \end{equation}

 The {associated function} of the sequence  $(M_p)$ is given by
\[
M(\rho)= \sup_{p\in\mathbb{N}_0} \log_+\frac{\rho^p}{M_p},  \quad \rho>0.
\]
\smallskip

For an arbitrary $k = (k_1,\ldots,k_d)\in\mathbb{N}_0^d$
denote by $D^k$ the differential operator of the form

\[\displaystyle D^k=D_1^{k_1}\cdots D_d^{k_d}:=
\left(\frac{1}{i}\frac{\partial}{\partial x_1}\right)^{k_1}\cdots\left(\frac{1}{i}\frac{\partial}{\partial x_d}\right)^{k_d}.
\]
\smallskip

An essential role in our considerations will play Komatsu's lemma proved in \cite{Kom3}
(see Lemma\,3.4 and Proposition\,3.5) in which numerical sequences mo\-no\-tono\-usly increasing
to infinity are involved. The class of such sequences
$(r_{\!p})_{p\in\mathbb{N}_0}$ (with $r_0=1$)
has been denoted by $\mathfrak{R}$ in \cite{PP} and  \cite{DPPV} and we preserve this notation in our paper.

 For every $(r_{\!p})\in\mathfrak{R}$ we call $(R_p)$ the \textit{product sequence}
 corresponding to  $(r_{\!p})$ if its elements are of the form
 $R_p:=\prod_{i=0}^p r_i$ for $p\in\mathbb{N}_0$ (i.e. $R_0=1$).

 Let us recall Komatsu's lemma in the following equivalent form which emphasizes the symmetry of two assertions:

\begin{lemma}\label{Kom}
 Let $(a_k)_{k\in\mathbb{N}_0}$ be a sequence of nonnegative numbers.

\medskip
 $(I)$\quad\; The following two conditions are equivalent:
   \begin{eqnarray*}
  \vspace{-.1cm}
(A_1) \qquad \exists_{h> 0} \;\;\;\;  &&  \sup_{k\in\mathbb{N}_0}\frac{a_k}{h^k} <\infty;
 \hspace{4cm} \\
\vspace{-.3cm}
 (B_1) \qquad \forall_{(r_k)\in \mathfrak{R}}  &&  \sup_{k\in\mathbb{N}_0}\frac{a_k}{R_k} <\infty;
 \hspace{4cm}
  \end{eqnarray*}

 \vspace{-.1cm}
 $(I\!I)$\quad\;  the following two conditions are equivalent:
   \begin{eqnarray*}
 (A_2) \qquad \forall_{h> 0} \;\;\;\; &&  \sup_{k\in\mathbb{N}_0}(h^k a_k) <\infty;  \hspace{3.3cm} \\
\vspace{-.3cm}
 (B_2) \qquad \exists_{(r_k)\in \mathfrak{R}}  &&
   \sup_{k\in\mathbb{N}_0}(R_k a_k) <\infty,  \hspace{3.3cm}  \label{Ch_p}
  \end{eqnarray*}
 where $(R_k)$ is the product sequence corresponding to the sequence
    $(r_k)\in \mathfrak{R}$.
 \end{lemma}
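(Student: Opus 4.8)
The plan is to prove each of the two parts by establishing its two implications separately. In both parts one implication is ``easy'' --- it rests on the single elementary fact that a sequence in $\mathfrak{R}$ eventually dominates any prescribed constant --- while the other requires an explicit construction of a sequence $(r_k)\in\mathfrak{R}$, built as a step function that is constant on the successive blocks of a suitably chosen partition of $\mathbb{N}_0$.

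The elementary fact is the following. If $(r_k)\in\mathfrak{R}$ and $h>0$, then since $r_k\uparrow\infty$ there is $N$ with $r_k\ge h$ for $k\ge N$, hence, splitting the product defining $R_k$ at $N$,
\[
R_k=R_{N-1}\prod_{i=N}^{k}r_i\ge R_{N-1}\,h^{\,k-N+1}\qquad(k\ge N),
\]
so that $h^{k}/R_k\le h^{N-1}/R_{N-1}$ for $k\ge N$ and therefore $\sup_k h^{k}/R_k<\infty$. Writing $a_k/R_k=(a_k/h^{k})(h^{k}/R_k)$ this gives $(A_1)\Rightarrow(B_1)$ for an arbitrary $(r_k)\in\mathfrak{R}$; writing $h^{k}a_k=(h^{k}/R_k)(R_ka_k)$ it gives $(B_2)\Rightarrow(A_2)$ for an arbitrary $h>0$.

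For $(B_1)\Rightarrow(A_1)$ I would argue by contraposition. If $(A_1)$ fails then $\sup_k a_k/n^{k}=\infty$ for every $n\in\mathbb{N}$, so one can choose indices $1\le k_1<k_2<\cdots$ with $a_{k_n}\ge n^{\,k_n+1}$. Setting $r_0:=1$ and $r_k:=n$ for $k_{n-1}<k\le k_n$ (with $k_0:=0$) produces a nondecreasing sequence tending to infinity, hence an element of $\mathfrak{R}$, and telescoping gives $R_{k_n}=\prod_{j=1}^{n}j^{\,k_j-k_{j-1}}\le n^{\,k_n}$, so $a_{k_n}/R_{k_n}\ge n\to\infty$ and $(B_1)$ fails. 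The implication $(A_2)\Rightarrow(B_2)$ is the most delicate and is handled by a similar but more careful construction: with $C_n:=\sup_k n^{k}a_k<\infty$ (so $a_k\le C_n n^{-k}$ for all $k,n$), one picks $N_0:=0<N_1<N_2<\cdots$ with $N_{n-1}\ge\log_2 C_{2n}$ for $n\ge2$ and sets $r_0:=1$, $r_k:=n$ for $N_{n-1}<k\le N_n$; then $(r_k)\in\mathfrak{R}$, for $k$ in the $n$-th block one telescopes $R_k\le(n-1)^{N_{n-1}}n^{\,k-N_{n-1}}\le n^{k}$, and combining with $a_k\le C_{2n}(2n)^{-k}$ yields $R_ka_k\le C_{2n}2^{-k}\le1$ for $n\ge2$ and $R_ka_k\le C_2$ on the first block, so $\sup_k R_ka_k<\infty$.

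The whole argument is elementary and uses nothing about $(M_p)$ beyond the definitions; the only real obstacle is the bookkeeping in the two constructions. One must choose the block endpoints so that the resulting $(r_k)$ is genuinely in $\mathfrak{R}$ (nondecreasing, $r_0=1$, unbounded) while the telescoped bound for $R_k$ on each block is strong enough to dominate the relevant estimate on $a_k$; the slack between $n$ and $2n$ in the last step is precisely what turns the block-wise estimate $R_k\le n^{k}$ and the decay $a_k\le C_{2n}(2n)^{-k}$ into the summable (in particular bounded) factor $2^{-k}$.
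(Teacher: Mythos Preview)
The paper does not prove this lemma at all: it is stated as ``Komatsu's lemma,'' explicitly attributed to \cite{Kom3} (Lemma~3.4 and Proposition~3.5 there), and used without proof. Your argument is correct and is, in fact, essentially the standard proof one finds in Komatsu's paper: the two ``easy'' implications follow from the eventual domination $R_k\gtrsim h^{k}$, and the two ``hard'' implications are obtained by building a step sequence in $\mathfrak{R}$ on a suitably chosen increasing sequence of breakpoints.

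One small point worth tightening: the class $\mathfrak{R}$ in this paper consists of sequences \emph{monotonically increasing} to infinity with $r_0=1$. Your step constructions produce sequences that are merely nondecreasing (they are constant on each block). In this literature ``increasing'' is normally read as nondecreasing, so your sequences are admissible as written; but if one insists on strict monotonicity, the fix is immediate. In $(B_1)\Rightarrow(A_1)$ you may replace your $(r_k)$ by any strictly increasing $(\tilde r_k)\in\mathfrak{R}$ with $\tilde r_k\ge r_k$ (e.g.\ $\tilde r_k:=r_k+k$), since then $\tilde R_k\ge R_k$ forces $a_{k_n}/\tilde R_{k_n}\le a_{k_n}/R_{k_n}$\,---\,wait, that inequality goes the wrong way; rather, take $\tilde r_k\le r_k$ strictly increasing, or simply perturb within each block to make the sequence strictly increasing while keeping $\tilde R_{k_n}\le n^{k_n}$. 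In $(A_2)\Rightarrow(B_2)$ the analogous perturbation (making $\tilde r_k$ strictly increasing with $\tilde r_k\le r_k$, hence $\tilde R_k\le R_k\le n^{k}$ on the $n$-th block) preserves your bound $\tilde R_k a_k\le C_{2n}2^{-k}$. None of this affects the substance of your argument.
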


\begin{remark} \label{remK}
The above lemma can be easily extended to the  $d$-dimensional version
 concerning sequences $(a_k)_{k\in\mathbb{N}_0^d}$ of nonnegative numbers.
\end{remark}

It is worth noticing that Lemma\,\ref{Kom} delivers two simple characterizations (dual to each other):
$1^{\circ}$  of \textit{slowly increasing sequences} (i.e. satisfying $(A_1)$), \
$2^{\circ}$  of \textit{rapidly decreasing sequences} (i.e. satisfying $(A_2)$) of nonnegative numbers.
They are expressed through respective properties of sequences, described by product sequences corresponding
to sequences of the class $\mathfrak{R}$.

In what follows we will also apply the following simple lemma (see \cite{SMK}):

\begin{lemma}\label{Rk-ineq}
For every $(r_p)\in \mathfrak{R}$,
the following inequality holds:
\begin{equation}\label{Rpq}
R_{|k|}\cdot R_{|l|} \leq R_{|k+l|},\qquad\quad k,l\in \mathbb{N}_0^d,
\end{equation}
where $(R_p)$ is the product sequence corresponding to $(r_p)$.
\end{lemma}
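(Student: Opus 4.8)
The plan is to reduce the claimed $d$-dimensional inequality to a one-dimensional statement about the product sequence and then prove the latter by a direct monotonicity argument. First I would note that, since all entries of $k$ and $l$ are nonnegative integers, $|k+l| = |k| + |l|$; hence it suffices to establish
\[
R_m \cdot R_n \le R_{m+n}, \qquad m,n\in\mathbb{N}_0,
\]
for the product sequence $(R_p)$ of an arbitrary $(r_p)\in\mathfrak{R}$.

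For this I would split the defining product at the index $m$ and re-index the tail:
\[
R_{m+n} = \prod_{i=0}^{m+n} r_i = \Big(\prod_{i=0}^{m} r_i\Big)\Big(\prod_{i=m+1}^{m+n} r_i\Big) = R_m \cdot \prod_{j=1}^{n} r_{m+j}.
\]
Because $(r_p)\in\mathfrak{R}$ is nondecreasing, $r_{m+j}\ge r_j$ for each $j=1,\dots,n$, so $\prod_{j=1}^{n} r_{m+j} \ge \prod_{j=1}^{n} r_j$; and since $r_0=1$ the latter product equals $\prod_{j=0}^{n} r_j = R_n$. Combining the two displays gives $R_{m+n}\ge R_m R_n$. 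Returning to multi-indices with $m=|k|$, $n=|l|$ and using $|k+l|=|k|+|l|$ then yields (\ref{Rpq}).

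I do not anticipate a genuine obstacle here: the argument is the product-sequence analogue of the derivation of (\ref{Mkjd}) from (M.1), and the only points requiring care are the bookkeeping of the index ranges in the splitting of the product and the explicit use of the normalization $r_0 = 1$ (which identifies $R_n$ with $\prod_{j=1}^{n} r_j$) together with the monotonicity built into the definition of the class $\mathfrak{R}$. An induction on $n$ with $m$ fixed, using $R_{m+n+1} = R_{m+n}\, r_{m+n+1} \ge R_m R_n\, r_{n+1} = R_m R_{n+1}$ at the inductive step, provides an equally short alternative.
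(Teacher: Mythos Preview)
Your argument is correct and complete: the reduction to the one-dimensional estimate via $|k+l|=|k|+|l|$ is valid for nonnegative multi-indices, and the splitting $R_{m+n}=R_m\prod_{j=1}^{n}r_{m+j}\ge R_m\prod_{j=1}^{n}r_j=R_mR_n$ uses exactly the monotonicity and the normalization $r_0=1$ built into the class $\mathfrak{R}$. The paper itself does not supply a proof but merely cites \cite{SMK}; your direct monotonicity argument is the natural one and is precisely the product-sequence analogue of the derivation of (\ref{Mkjd}) from (M.1), as you observe.
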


It will be convenient to use for $\lambda>0$
and $(r_{\!p})\in\mathfrak{R}$ the
following notation:
 \begin{equation}\label{lamrp}
 \lambda(r_{\!p}) = (\overline{r}_{\!p}), \; \mbox{where} \;\; \overline{r}_{0}=1  \; \; \mbox{and} \;\;
 \overline{r}_{\!p} = \lambda r_{\!p} \;  \; \mbox{for} \; \; p\in\mathbb{N}.
 \end{equation}
 \noindent

 Clearly, if $(r_{\!p})\in\mathfrak{R}$, then
 $\lambda(r_{\!p})\in\mathfrak{R}$ for $\lambda \geq 1$. Moreover, $\lambda(r_{\!p})\in\mathfrak{R}$
 for $0 < \lambda < 1$ in case  $(r_{\!p})\in\mathfrak{R}$ and $r_1 > \lambda^{-1}$.




\section{Ultradifferentiable Functions}


   For a given complex-valued function $\varphi$ on $\mathbb{R}^{d}$ and a compact set $K$ in $\mathbb{R}^{d}$ denote
   \[
   \|\varphi\|_{\infty} := \sup_{x\in\mathbb{R}^{d}} |\varphi (x)|;\qquad \|\varphi\|_K := \sup_{x\in K} |\varphi (x)|.
   \]
  \smallskip
 For a given sequence
 $(M_p)$, a regular compact set $K$
 in $\mathbb{R}^{d}$
 and $h > 0$ the symbol
 ${\cal{E}}^{\{M_{p}\}}_{K,h,d} $
 will mean the locally convex space (l.c.s.)
 of all $\mathcal{C}^{\infty}$-functions
 $\varphi$ on $\mathbb{R}^{d}$ such that
 \begin{equation}\label{1qn}
  q_{K, h}(\varphi) := \sup_{k \in \mathbb{N}^{d}_0}
 \frac{
 \| D^k\varphi \|_K} {h^{|k|}M_{k}} < \infty,
\end{equation}
 with the topology defined by
 the semi-norm $q_{K, h}$ given above,
 while the symbol ${\cal{D}}^{\{M_{p}\}}_{K,h,d}$
 will mean the Banach space of all $\cal{C}^{\infty}$-functions $\varphi$ satisfying (\ref{1qn})
  and having  supports contained in  $K$,
  with the topology of the norm $q_{K, h}$ in (\ref{1qn}).

 For a fixed sequence
 $(M_p)$, we consider the following
            locally convex spaces
 of ultradifferentiable functions on $\mathbb{R}^{d}$:

 \begin{equation}\label{D-K}
 {\cal D}_{K,d}^{\{M_p\}}
 \!:=\;
  \lim_{\begin{subarray}{c}
    \displaystyle\longrightarrow \\
    {\scriptsize h\rightarrow \infty}
    \end{subarray}} \,
 {\cal D}_{K,h,d}^{\{M_p\}}\, ;  \qquad
  {\cal D}_d^{\{M_p\}}
 \,\!:=\!
   \lim_{\begin{subarray}{c}
   \displaystyle\longrightarrow \\
   {\scriptsize K\subset\subset \mathbb{R}^d}
   \end{subarray}} \,
   {\cal D}_{K,d}^{\{M_p\}};
 \end{equation}
 \begin{equation}\label{E}
 {\cal E}_d^{\{M_p\}} :=
 \lim_{\begin{subarray}{c}
   \displaystyle\longleftarrow \\
   {\scriptsize K\subset\subset \mathbb{R}^d}
   \end{subarray}} \,
  \lim_{\begin{subarray}{c}
    \displaystyle\longrightarrow \\
    {\scriptsize h\rightarrow \infty}
    \end{subarray}} \;
  {\cal E}_{K,h,d}^{\{M_p\}}\, ,
  \end{equation}
\noindent
where the symbol $K\subset\subset \mathbb{R}^d$
means that compact sets $K$
grow up to $\mathbb{R}^d.$

 Moreover, for a given $(M_p)$, we define
\[
 {\cal D}_{L^{\infty},d}^{\{M_p\}} \,:=\,
 \lim_{\begin{subarray}{c}
    \displaystyle\longrightarrow \\
    {\scriptsize h\rightarrow \infty}
    \end{subarray}} \,
 {\cal D}_{L^{\infty},h,d}^{\{M_p\}}
 \vspace{-.2cm}
 \]
 where ${\cal{D}}^{\{M_p\}}_{L^{\infty},h,d}$
 is the Banach space of
 all $\mathcal{C}^{\infty}$-functions $\varphi$
 on $\mathbb{R}^d$ such that
   \begin{equation}\label{DL-inf}
   \|\varphi\|_{\infty,h} :=\, \sup
   \left\{ (h^{k}M_k)^{-1}\|D^k\varphi\|_{\infty}\!:\ k \in \mathbb{N}^{d}_{0}\right\} < \infty,
  \end{equation}
 with the norm $\|\cdot\|_{\infty,h}$ defined above.

\smallskip
 For a given regular compact set
  $K\subset\mathbb{R}^d$ and given sequences $(M_p)$ and $(r_p)\in\mathfrak{R}$ we denote by
 ${\cal D}^{\{M_p\}}_{K,(r_p),d}$
 the Banach space of all $\mathcal{C}^{\infty}$-functions $\varphi$ on $\mathbb{R}^d$
 having supports contained in $K$ such that
    \begin{equation}\label{DK-ri}
    \|\varphi\|_{K,(r_p)} := \sup_{k \in \mathbb{N}^{d}_0}
        \frac{\| D^k\varphi\|_K}
        {R_{|k|}M_{k}} < \infty
\end{equation}
 with the norm $\|\cdot\|_{K,(r_p)}$ defined above.

The following result is essentially due to Komatsu \cite{Kom3} (see also \cite{SMK} for the proof), since it is a consequence of
his
Lemma \ref{Kom} recalled above.

\begin{proposition} \label{p1}
    We have the equality
\[
 \displaystyle {\cal D}_{K,d}^{\{M_p\}} =
 \lim_{\begin{subarray}{c}
   \displaystyle\longleftarrow \\
   {\scriptsize (r_p)\in\mathfrak{R}}
   \end{subarray}} \,
 \, {\cal D}_{K,(r_p),d}^{\{M_p\}}\, ,
 \]
 where the space $\displaystyle {\cal D}_{K,d}^{\{M_p\}}$ is  defined in (\ref{D-K}).
\end{proposition}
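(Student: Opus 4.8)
The plan is to prove the two inclusions $\mathcal{D}_{K,d}^{\{M_p\}} \subseteq \varprojlim_{(r_p)} \mathcal{D}_{K,(r_p),d}^{\{M_p\}}$ and the reverse, the content being really the equality of the underlying sets together with the coincidence of the topologies (projective limit on the right). For the set-theoretic part, fix $\varphi \in \mathcal{C}^\infty$ with $\su\varphi \subseteq K$ and apply Lemma~\ref{Kom}(I) with the $d$-dimensional extension noted in Remark~\ref{remK}, taking $a_k := \|D^k\varphi\|_K / M_k$ for $k\in\mathbb{N}_0^d$. Then $\varphi \in \mathcal{D}_{K,d}^{\{M_p\}}$ means $\varphi\in\mathcal{D}_{K,h,d}^{\{M_p\}}$ for some $h>0$, i.e. $\sup_k a_k/h^{|k|}<\infty$, which is precisely condition $(A_1)$; by the equivalence with $(B_1)$ this is the same as $\sup_k a_k / R_{|k|} < \infty$ for every $(r_p)\in\mathfrak{R}$, i.e. $\|\varphi\|_{K,(r_p)}<\infty$ for all $(r_p)\in\mathfrak{R}$, which says $\varphi$ belongs to each $\mathcal{D}_{K,(r_p),d}^{\{M_p\}}$ and hence to the projective limit. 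So the two sides agree as vector spaces.

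Next I would check that the projective limit on the right is well defined as stated, i.e. that the family $\{\mathcal{D}_{K,(r_p),d}^{\{M_p\}}\}_{(r_p)\in\mathfrak{R}}$ is directed (so that the inverse limit over $\mathfrak{R}$ makes sense): given $(r_p),(s_p)\in\mathfrak{R}$, the sequence $(t_p)$ with $t_p := \min(r_p,s_p)$ again lies in $\mathfrak{R}$ (it is still monotonically increasing to infinity with $t_0=1$), and $R_{|k|}\geq T_{|k|}$, $S_{|k|}\geq T_{|k|}$ give continuous inclusions $\mathcal{D}_{K,(t_p),d}^{\{M_p\}} \hookrightarrow \mathcal{D}_{K,(r_p),d}^{\{M_p\}}$ and likewise for $(s_p)$, with norm at most $1$. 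Thus the index set is directed and the inverse limit is the set-theoretic intersection carried with the initial (projective) topology generated by all the norms $\|\cdot\|_{K,(r_p)}$.

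It then remains to identify the topologies. On $\mathcal{D}_{K,d}^{\{M_p\}}$ the topology is the inductive limit of the Banach spaces $\mathcal{D}_{K,h,d}^{\{M_p\}}$ as $h\to\infty$; on the right it is the projective topology from the norms $\|\cdot\|_{K,(r_p)}$. For one direction, each inclusion $\mathcal{D}_{K,h,d}^{\{M_p\}}\hookrightarrow\mathcal{D}_{K,(r_p),d}^{\{M_p\}}$ is continuous because, choosing $p_0$ with $r_p\geq h$ for $p\geq p_0$ and comparing $h^{|k|}$ with $R_{|k|}$ up to a constant $C=C(h,(r_p))$ depending only on the finitely many initial terms, one gets $\|\varphi\|_{K,(r_p)}\leq C\, q_{K,h}(\varphi)$; by the universal property of the inductive limit this yields a continuous map $\mathcal{D}_{K,d}^{\{M_p\}}\to\varprojlim_{(r_p)}\mathcal{D}_{K,(r_p),d}^{\{M_p\}}$, which is the identity on the common underlying space. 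The converse continuity — that a set open in the inductive-limit topology is open in the projective one — is the delicate point and is exactly where Lemma~\ref{Kom} does the real work: one argues that a bounded (hence relatively compact, since these are $(DFS)$-type spaces) subset of the projective limit, or more directly a convergent sequence, is already contained and bounded in some single $\mathcal{D}_{K,h,d}^{\{M_p\}}$. Concretely, if $B$ is bounded in the projective limit then $\sup_{\varphi\in B}\|\varphi\|_{K,(r_p)}<\infty$ for every $(r_p)\in\mathfrak{R}$; applying the $(B_2)\Rightarrow(A_2)$ part of Lemma~\ref{Kom}(II) to the sequence $a_k := \sup_{\varphi\in B}\|D^k\varphi\|_K/M_k$ (after first reducing to a countable situation, or invoking the version in \cite{Kom3} directly), one concludes $\sup_k h^{|k|}a_k<\infty$ is \emph{too strong}; rather the correct reading is $(B_1)\Rightarrow(A_1)$: finiteness of $\sup_k a_k/R_{|k|}$ for all $(r_p)$ forces $\sup_k a_k/h^{|k|}<\infty$ for some $h$, i.e. $B$ bounded in $\mathcal{D}_{K,h,d}^{\{M_p\}}$. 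Since both spaces are webbed (indeed $(DFS)$ on the left, Fréchet on the right) and have the same bounded sets and the same dual, a closed-graph / Pt\'ak-type argument, or simply the fact that a continuous bijection between such spaces that preserves bounded sets is a topological isomorphism, finishes the proof. The main obstacle is precisely this last topological comparison; I expect the cleanest route is to cite Komatsu \cite{Kom3} (Lemma~3.4, Proposition~3.5) for the statement that the topologies coincide, exactly as the paper announces, rather than to reprove the functional-analytic mechanics here.
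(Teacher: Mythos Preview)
The paper does not actually supply a proof of this proposition: immediately before the statement it says the result ``is essentially due to Komatsu \cite{Kom3}'' and ``is a consequence of his Lemma~\ref{Kom}'', and refers to \cite{SMK} for details. Your proposal is therefore far more explicit than what the paper offers, and its core mechanism---applying part~(I) of Lemma~\ref{Kom} to $a_k=\|D^k\varphi\|_K/M_k$ for the set-theoretic equality, and then to $a_k=\sup_{\varphi\in B}\|D^k\varphi\|_K/M_k$ to see that the bounded sets of the two sides coincide---is exactly the ingredient the paper is invoking. In that sense your approach and the paper's (implicit) one agree.

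Two small corrections are worth making. First, the projective limit on the right is taken over the uncountable index set $\mathfrak{R}$, so it is not Fr\'echet as you write in passing; this does not affect your argument, but the parenthetical should be dropped. Second, your final step (``a continuous bijection between such spaces that preserves bounded sets is a topological isomorphism'') is not quite self-contained: to get continuity of the inverse from the bounded-set statement you would need the \emph{projective} side to be bornological, which is not obvious. The route actually taken in \cite{Kom3} (Proposition~3.5) is instead to show directly that the seminorms $\|\cdot\|_{K,(r_p)}$, $(r_p)\in\mathfrak{R}$, form a fundamental system of continuous seminorms on the $(DFS)$ space $\mathcal{D}_{K,d}^{\{M_p\}}$, again via Lemma~\ref{Kom}. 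Your closing instinct---that the cleanest way to handle the topological identification is to cite Komatsu---is thus precisely what the paper does.
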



 For given $(M_p)$ and
$(r_p)\in\mathfrak{R}$ we denote by
${\cal D}_{L^{\infty},(r_p),d}^{\{M_p\}}$
 the Banach space of all $\mathcal{C}^{\infty}$-functions
 $\varphi$ on $\mathbb{R}^d$
  such that
     \begin{equation}\label{D-ri}
        \|\varphi\|_{(r_p)} := \sup_{k \in \mathbb{N}^{d}_0}
       \frac{\|D^k\varphi\|_{\infty}}{{R_{|k|}M_{k}}} < \infty,
     \end{equation}
 with the norm $\|\cdot\|_{(r_p)}$ defined in (\ref{D-ri}).

\smallskip
 For a given sequence $(M_p)$
the following projective description od the space ${\cal D}_{L^{\infty},d}^{\{M_p\}}$ follows from the results proved
in \cite{DPPV,PPV}:
 \[
  \displaystyle \displaystyle
  {\cal D}_{L^{\infty},d}^{\{M_p\}}
  \,=\! \lim_{\begin{subarray}{c}
   \displaystyle\longleftarrow \\
   {\scriptsize (r_p)\in\mathfrak{R}}
   \end{subarray}} \,
 {\cal D}_{L^{\infty},(r_p),d}^{\{M_p\}},
  \]
where the equality holds in the sense of l.c.s.

     We denote by $\dot{\cal{B}}^{\{M_p\}}_d$
 the completion of ${\cal{D}}^{\{M_p\}}_d$ in
	${\cal{D}}^{\{M_p\}}_{L^{\infty},d}$.

In \cite{SMK}, the following assertion concerning the product of functions in ${\cal{D}}^{\{M_p\}}_{L^{\infty},d}$ is proved:

\begin{proposition} \label{p2}
If $\varphi_1, \varphi_2\in {\cal{D}}^{\{M_p\}}_{L^{\infty},d}$,
then
$\varphi_1\cdot\varphi_2\in{\cal{D}}^{\{M_p\}}_{L^{\infty},d}$.
Moreover, for every $(r_{\!p})\in\mathfrak{R}$ such that $r_1>2$
the inequality holds:
   \begin{equation}\label{no_fiphi}
      \|\varphi_1\cdot\varphi_2\|_{(r_p)} \leq    \|\varphi_1\|_{{(r_{\!p})}\!/{2}}\cdot \|\varphi_2\|_{(r_{\!p})\!/{2}},
     \end{equation}
 where $({r_{\!p}})/{2}$ is meant in the sense of (\ref{lamrp}).
\end{proposition}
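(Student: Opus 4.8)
The plan is to reduce the statement about $\varphi_1\cdot\varphi_2$ to a pointwise estimate on $D^k(\varphi_1\varphi_2)$ via the Leibniz rule, and then exploit the multiplicativity-type inequality \eqref{Rpq} from Lemma~\ref{Rk-ineq} together with \eqref{Mkjd}. First I would fix $(r_p)\in\mathfrak{R}$ with $r_1>2$ and write $(\overline r_p):=(r_p)/2$ in the sense of \eqref{lamrp}, noting that since $r_1>2$ we have $\overline r_1 = r_1/2 > 1$, so $(\overline r_p)\in\mathfrak{R}$ and the norms $\|\varphi_i\|_{(r_p)/2}$ on the right-hand side make sense. Let $(\overline R_p)$ be the product sequence of $(\overline r_p)$; the key arithmetic observation is that $\overline R_p = R_p/2^{p}$ for $p\ge 1$ (and $\overline R_0 = 1$), which I would record explicitly because it is what converts the factor $2^{|k|}$ coming from Leibniz into exactly the missing powers of $2$.

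Next, for $k\in\mathbb{N}_0^d$ I would apply the Leibniz formula $D^k(\varphi_1\varphi_2) = \sum_{j\le k}\binom{k}{j} D^j\varphi_1\, D^{k-j}\varphi_2$, take sup-norms over $\mathbb{R}^d$, and bound each term by $\binom{k}{j}\|D^j\varphi_1\|_\infty\|D^{k-j}\varphi_2\|_\infty$. Using the definition \eqref{D-ri} of the two norms on the right, $\|D^j\varphi_1\|_\infty \le \overline R_{|j|}M_j\,\|\varphi_1\|_{(\overline r_p)}$ and similarly for $\varphi_2$, so the $j$-th term is at most $\binom{k}{j}\,\overline R_{|j|}\overline R_{|k-j|}M_jM_{k-j}\,\|\varphi_1\|_{(\overline r_p)}\|\varphi_2\|_{(\overline r_p)}$. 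Now \eqref{Mkjd} gives $M_jM_{k-j}\le M_k$, and Lemma~\ref{Rk-ineq} (inequality \eqref{Rpq}) applied to the sequence $(\overline r_p)$ gives $\overline R_{|j|}\overline R_{|k-j|}\le \overline R_{|k|}$. Summing over $j\le k$ and using $\sum_{j\le k}\binom{k}{j} = 2^{|k|}$, I obtain $\|D^k(\varphi_1\varphi_2)\|_\infty \le 2^{|k|}\,\overline R_{|k|}M_k\,\|\varphi_1\|_{(\overline r_p)}\|\varphi_2\|_{(\overline r_p)}$.

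Finally I would divide by $R_{|k|}M_k$ and take the supremum over $k$: using $2^{|k|}\overline R_{|k|} = 2^{|k|}\cdot R_{|k|}/2^{|k|} = R_{|k|}$ for $|k|\ge 1$ (and the trivial case $k=0$), the ratio collapses to $\|\varphi_1\|_{(\overline r_p)}\|\varphi_2\|_{(\overline r_p)}$, which is precisely \eqref{no_fiphi}. This in particular shows $\|\varphi_1\varphi_2\|_{(r_p)}<\infty$; since $(r_p)\in\mathfrak{R}$ with $r_1>2$ was arbitrary and such sequences are cofinal in $\mathfrak{R}$, the projective description ${\cal D}^{\{M_p\}}_{L^\infty,d} = \varprojlim_{(r_p)\in\mathfrak{R}} {\cal D}^{\{M_p\}}_{L^\infty,(r_p),d}$ recalled just above yields $\varphi_1\varphi_2\in{\cal D}^{\{M_p\}}_{L^\infty,d}$.

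The main obstacle is bookkeeping rather than conceptual: one has to be careful that the halving operation $(r_p)\mapsto(r_p)/2$ in the sense of \eqref{lamrp} really produces the product-sequence identity $\overline R_p = 2^{-p}R_p$ for $p\ge1$ — the $p=0$ term is exempt because $r_0=\overline r_0=1$ — and that the cofinality of $\{(r_p)\in\mathfrak{R}:r_1>2\}$ in $\mathfrak{R}$ (clear, since replacing $r_1$ by $\max(r_1,3)$ only enlarges the sequence) legitimately passes from the conditional estimate to membership in the full projective limit. Everything else is the standard Leibniz-plus-submultiplicativity argument.
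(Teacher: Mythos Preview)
Your argument is correct: the Leibniz rule together with \eqref{Mkjd}, Lemma~\ref{Rk-ineq} applied to $(\overline r_p)=(r_p)/2\in\mathfrak{R}$, the binomial identity $\sum_{j\le k}\binom{k}{j}=2^{|k|}$, and the arithmetic $2^{|k|}\overline R_{|k|}=R_{|k|}$ give exactly \eqref{no_fiphi}, and cofinality of $\{(r_p)\in\mathfrak{R}:r_1>2\}$ then yields the membership statement via the projective description. The paper itself does not supply a proof here (it refers to \cite{SMK}), so there is no in-paper argument to compare against; your proof is the standard and expected one.
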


 \begin{remark} \label{fiphiD}
The assertion of Proposition\,\ref{p2} is true
for functions
 $\varphi_1, \varphi_2$ from the space ${\cal{D}}^{\{M_p\}}_d$
 and
 semi-norms
 (\ref{DK-ri}).
    \end{remark}

  \begin{remark} \label{rem0rn}
 We may assume, if necessary, that the considered sequence $(r_{\!p})\in\mathfrak{R}$ satisfies for any given constant $c>0$
 the inequality $r_{\!p}>c$ for all $p\in\mathbb{N}$. In fact, we have  $\|\varphi\|_{(r_{\!p})}<\infty$ if and only if  $\|\varphi\|_{(\widetilde{r}_{\!p})}<\infty$ for
 all $\varphi\in{\cal{D}}^{\{M_p\}}_d$ with the sequence $(\widetilde{r}_{\!p})\in\mathfrak{R}$  defined by
 $\widetilde{r}_0=1$ and $\widetilde{r}_{\!p}:= r_{\!p+p_0}$ for all $p\in\mathbb{N}$, where  $p_0\in\mathbb{N}$ is an index  such that $r_{\!p}>c$  for $p>p_0$.
  \end{remark}




\section{Roumieu Ultradistributions}


 \textbf{Definition 1}. \label{ultraRo}
  We denote  the strong dual of the space\ ${\cal{D}}^{\{M_{p}\}}_d$
  by \ ${\cal{D'}}^{\{M_{p}\}}_d$
  and call it the {\it space
  of Roumieu ultradistributions}.

 The  strong dual of the space\ $\dot{{\cal{B}}}^{\{M_p\}}_d$,
 denoted by\ ${\cal{D'}}^{\{M_p\}}_{L^{1},d}$
 is called
 the space of {\it Roumieu integrable ultradistributions}.

\begin{remark} \label{rem1}
The space\ ${\cal{D}}^{\{M_p\}}$\ is
 dense in\ $\dot{{\cal{B}}}^{\{M_p\}}$\ and the respective inclusion  mapping is continuous.
 Consequently, the space\ ${\cal{D'}}^{\{M_p\}}_{L^{1}}$\ of
 Roumieu integrable ultradistributions
        is a subspace of the space\ \ddr\  of
Roumieu ultradistributions.
        \end{remark}

In the sequel, we assume that the sequence $(M_p)$ satisfies conditions (M.1), (M.2') and (M.3'), so
we can use the projective description of the considered locally convex spaces of test functions.

 \textbf{Definition 2}. \label{RAU}
 By an {\it $\mathfrak{R}$-approximate unit} we mean a sequence
$(\p_{n})$ of ultradifferentiable functions
$\p_{n}\in\md_d$
converging to $1$  in $\me_d$  such that the following property
holds for every sequence $(r_{\!p})\in\mathfrak{R}$:
 \begin{equation}\label{RRap}
 \sup_{n \in \mathbb{N}} \|\p_n\|_{(r_{\!p})} =
 \sup_{n \in \mathbb{N}} \sup_{k \in \mathbb{N}_0^{d}}
	 (R_{|k|}M_{k})^{-1} \|D^k\p_n\|_{\infty}  < \infty,
 \end{equation}
where $(R_p)$ is the product sequence corresponding to $(r_p)$.

 \textbf{Definition 3}. \label{RsAU}
 By a {\it special $\mathfrak{R}$-approximate unit} we mean an $\mathfrak{R}$-approximate unit
 $(\p_{n})$ such that for every compact set $K\subset \mathbb{R}^{d}$ there
 exists an index $n_0 \in\mathbb{N}$ such that $\p_{n}(x)=1$ for all $n\geq n_0$ and $x\in K$.

We denote the class of all $\mathfrak{R}$-approximate units on $\mathbb{R}^{d}$ by\;
${\mathbb U}^{\{M_p\}}_d$ and the class of all special $\mathfrak{R}$-approximate units on $\mathbb{R}^{d}$ by\;
$\overline{\mathbb U}^{\{M_p\}}_d$.

 \begin{remark}\label{DC}
 By the Denjoy-Carleman theorem, the defined above spaces of ultradifferentiable functions as well as the classes\;
 ${\mathbb U}_d^{\{M_p\}}$ and   $\overline{\mathbb U}_d^{\{M_p\}}$
 of approximate units contain sufficiently many members.
 \end{remark}


\section{Integrability of Roumieu Ultradistributions}


 We formulate below a characterization of integrable Roumieu ultradistributions,
 being an analog of the theorem of Dierolf and  Voigt concerning integrable distributions (see \cite{DiVo})
 and of the theorem of Pilipovi\'{c} concerning ultradistributions of Beurling type (see \cite{Pil91}).
 The proof of the theorem
 is given in \cite{SMK}.

\smallskip
\begin{theorem}\label{thI}
Let $V\in{\cal{D}'}^{\{M_p\}}_d$. The following conditions are equivalent:

\smallskip
\indent
  $(A)$\;\; $V$ is continuous on ${\cal{D}}^{\{M_p\}}_d$
  in the topology induced by
$\dot{\cal{B}}^{\{M_p\}}_d$,
  i.e. there are a sequence $(r_{\!p})\in\mathfrak{R}$ and a constant $C>0$ such that the inequality
\begin{equation}\label{a0}
|\langle V,\varphi\rangle| \leq C \|\varphi\|_{(r_p)}
\end{equation}
holds for all $\varphi \in {\cal{D}}^{\{M_p\}}_d$;

\smallskip
$(B)$\;\; there is a sequence $(r_{\!p})\in\mathfrak{R}$ with the property that for every $\varepsilon>0$
there exists a regular compact set $K\subset \mathbb{R}^d$
such that the inequality
\[
|\langle V,\varphi\rangle| \leq \varepsilon  \|\varphi\|_{(r_p)}
\]
holds for $\varphi \in {\cal{D}}^{\{M_p\}}_d$ with
$\su \varphi \cap K = \emptyset$;

\smallskip
$(C)$\;\; for every $\(\p_n\) \in {\mathbb U} ^{\{M_p\}}_d$
    the sequence $\left(\langle V,\p_n\rangle\right)$ is Cauchy;

\smallskip
$(D)$\;\; for every $\(\p_n\) \in \overline{\mathbb U}^{\{M_p\}}_d$
    the sequence $\left(\langle V,\p_n\rangle\right)$ is Cauchy;

\smallskip
$(E)$\;\; there are a sequence $(r_{\!p})\in\mathfrak{R}$, a constant $C>0$ and a regular compact
$K \subset \mathbb{R}^d$ such that inequality (\ref{a0}) holds
for $\varphi \in {\cal{D}}_d^{\{M_p\}}$ with $\su \varphi
\cap K=\emptyset$.
\end{theorem}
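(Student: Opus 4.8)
The plan is to prove Theorem~\ref{thI} by establishing the cycle of implications $(A)\Rightarrow(C)\Rightarrow(D)\Rightarrow(E)\Rightarrow(B)\Rightarrow(A)$, which links the two ``global'' conditions $(A)$, $(B)$ with the two sequential conditions $(C)$, $(D)$ via the ``localized'' condition $(E)$. The implications $(A)\Rightarrow(C)$ and $(E)\Rightarrow(D)$ are the easy directions: once $V$ is controlled by a single seminorm $\|\cdot\|_{(r_p)}$, or by such a seminorm applied to functions supported away from a fixed compact set $K$, uniform boundedness of $(\|\p_n\|_{(r_p)})_n$ (which is exactly the defining property~(\ref{RRap}) of an $\mathfrak{R}$-approximate unit) together with the convergence $\p_n\to 1$ in $\me_d$ forces $(\langle V,\p_n\rangle)$ to be Cauchy; for $(E)\Rightarrow(D)$ one additionally uses that for a \emph{special} $\mathfrak{R}$-approximate unit the differences $\p_n-\p_m$ are supported outside $K$ for $n,m$ large, so the localized estimate applies directly to them. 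The inclusion $\overline{\mathbb U}^{\{M_p\}}_d\subset{\mathbb U}^{\{M_p\}}_d$ gives $(C)\Rightarrow(D)$ for free.

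The substantive part is the passage from a sequential hypothesis back to a quantitative one, i.e.\ $(D)\Rightarrow(E)$, and the ``glueing'' step $(B)\Rightarrow(A)$. For $(D)\Rightarrow(E)$ I would argue by contraposition: assuming $(E)$ fails, one has, for every $(r_p)\in\mathfrak{R}$, every $C>0$ and every regular compact $K$, a test function violating the estimate; feeding in an exhausting sequence $K_n\uparrow\mathbb{R}^d$, constants $C_n\to\infty$ and a diagonal choice over $\mathfrak{R}$ (here Komatsu's Lemma~\ref{Kom}, part $(I)$, in the form ``$\forall(r_p)\,\sup a_k/R_k<\infty$ iff $\exists h\,\sup a_k/h^k<\infty$'', is the crucial device that converts a family of $\mathfrak{R}$-indexed failures into a single genuine blow-up), one produces test functions $\psi_n$ with disjoint supports marching off to infinity, suitably normalized, whose sum $\p$-type rearrangement builds a special $\mathfrak{R}$-approximate unit along which $\langle V,\cdot\rangle$ is not Cauchy, contradicting $(D)$. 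The care needed here is in organizing the normalizations so that the partial sums really do land in $\overline{\mathbb U}^{\{M_p\}}_d$ — that is, satisfy~(\ref{RRap}) for \emph{every} $(r_p)\in\mathfrak{R}$ while eventually equalling $1$ on each compact set — and this is where the two-sided use of Komatsu's lemma (slowly increasing versus rapidly decreasing sequences) does the real work.

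For $(B)\Rightarrow(A)$ the idea is a decomposition $\varphi=\varphi_0+\varphi_1$ with $\varphi_0$ supported in a large compact $K$ and $\varphi_1$ supported outside it, obtained by multiplying by a fixed cutoff and its complement; Remark~\ref{fiphiD} (the Leibniz-type product estimate of Proposition~\ref{p2} transplanted to the spaces with seminorms~(\ref{DK-ri})) controls $\|\varphi_0\|$ and $\|\varphi_1\|$ in terms of $\|\varphi\|_{(r_p)}$ (up to replacing $(r_p)$ by $2(r_p)$ in the sense of~(\ref{lamrp})). On $\varphi_1$ one invokes $(B)$ with a fixed $\varepsilon$; on $\varphi_0$, which lives in the Banach space ${\cal D}^{\{M_p\}}_{K,(r_p),d}$, the ultradistribution $V$ is automatically continuous, giving a constant. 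Summing yields $(A)$. Finally $(A)\Rightarrow(B)$ is immediate by density of ${\cal D}^{\{M_p\}}_d$ in $\dot{\cal B}^{\{M_p\}}_d$ and the fact that functions supported far out have small $\dot{\cal B}$-norm contribution — more precisely one uses that $\|\varphi\|_{(r_p)}$ for $\su\varphi\cap K=\emptyset$ can be made to carry an extra small factor by choosing $K$ large, exploiting that test functions in $\dot{\cal B}^{\{M_p\}}_d$ are ``flat at infinity''. The main obstacle I anticipate is the diagonalization in $(D)\Rightarrow(E)$: one must simultaneously defeat the $\mathfrak{R}$-uniformity built into the definition of approximate units and keep the constructed sequence special, and getting the quantifiers in the right order — choosing the compacts, then the $\mathfrak{R}$-sequence witnessing the blow-up, then the normalizing scalars — is the delicate bookkeeping on which the whole equivalence hinges.
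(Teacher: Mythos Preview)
The paper itself does not prove Theorem~\ref{thI}; it only states the result and refers to \cite{SMK} for the proof. So there is no ``paper's own proof'' here to compare against, and I can only comment on the internal soundness of your outline.

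Your core ideas are the right ones --- the contraposition/diagonalization for $(D)\Rightarrow(E)$ built on Komatsu's Lemma~\ref{Kom}, and the cutoff decomposition for passing from a localized estimate back to the global one --- but the logical skeleton you propose has a genuine hole. Your cycle $(A)\Rightarrow(C)\Rightarrow(D)\Rightarrow(E)\Rightarrow(B)\Rightarrow(A)$ requires the implication $(E)\Rightarrow(B)$, which you never address: $(E)$ gives a \emph{bounded} estimate outside one fixed compact $K$, whereas $(B)$ demands that the constant can be driven to zero by enlarging $K$. This is not automatic, and nothing in your write-up supplies it. (In fact your cutoff argument for ``$(B)\Rightarrow(A)$'' only uses $(E)$, so what you have really shown is $(E)\Rightarrow(A)$; that is fine, but it leaves $(B)$ dangling.)

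You then try to close the gap by discussing $(A)\Rightarrow(B)$ at the end, but the justification you give is incorrect: the claim that ``$\|\varphi\|_{(r_p)}$ for $\su\varphi\cap K=\emptyset$ can be made to carry an extra small factor by choosing $K$ large'' is false, since the seminorms $\|\cdot\|_{(r_p)}$ are translation-invariant and a bump supported near infinity has the same norm as one supported near the origin. The passage $(A)\Rightarrow(B)$ genuinely requires an extra ingredient --- typically the structure/representation theorem for ${\cal D}'^{\{M_p\}}_{L^1,d}$ (writing $V=\sum_k D^k f_k$ with $f_k\in L^1$ and suitable decay, so that the $L^1$-mass of the $f_k$ outside $K$ tends to zero), or an equivalent argument exploiting that $\dot{\cal B}^{\{M_p\}}_d$ is the \emph{closure} of ${\cal D}^{\{M_p\}}_d$ rather than the whole of ${\cal D}^{\{M_p\}}_{L^\infty,d}$. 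Without such an input the equivalence of $(B)$ with the rest is not established.

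A cleaner organization that avoids the hole is: prove $(E)\Rightarrow(A)$ by your cutoff argument; prove $(A)\Rightarrow(B)$ via the structure theorem; note $(B)\Rightarrow(E)$ is trivial; and run $(A)\Rightarrow(C)\Rightarrow(D)\Rightarrow(E)$ as you describe. The delicate diagonalization in $(D)\Rightarrow(E)$ is indeed the technical heart, and your description of it is plausible, though the phrase ``a diagonal choice over $\mathfrak{R}$'' hides exactly the step that needs to be written out carefully: one must first negate $(E)$ uniformly in $(r_p)\in\mathfrak{R}$, then use part~$(I)$ of Lemma~\ref{Kom} to convert this into a single $h$-scale blow-up from which the offending special $\mathfrak{R}$-approximate unit can be assembled.
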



\section{Convolution of Roumieu Ultradistributions}


  S. Pilipovi\'{c} and B. Prangoski made in \cite{PP} a deep study
 of the\ convolution\  of\ Roumieu\
 ultradistributions.
 The study was based on the\ investigation of the $\epsilon$\ tensor\ product
 of the respective spaces of test functions. Let us recall some results proved and observations made in \cite{PP}.

  The authors use the results on the $\varepsilon$ tensor product
  from \cite{Kom3} to prove
  that
      \[
      \dot{{\cal{B}}}^{\{M_p\}}_{d_1}
       \varepsilon\,\,\, \dot{{\cal{B}}}^{\{M_p\}}_{d_2}
       \,\cong\, \dot{{\cal{B}}}^{\{M_p\}}_{d_1}
       \,\widehat{\otimes}_{\varepsilon}\,\, \dot{{\cal{B}}}^{\{M_p\}}_{d_2}
        \]
 in the sense of an isomorphism. They consider, analogously to ideas applied in \cite{ORT-WAG}
 to the convolution of measures,
  the following semi-norms in the space ${\cal{D}}^{\{M_p\}}_{L^{\infty},d}$:
  \[
 \quad q_{g,(r_p)} (\varphi):= \sup_{k \in \mathbb{N}^{d}_0}
 \sup_{x \in\mathbb{R}^d}  \frac{\mid g(x) D^k\varphi (x) \mid }{R_{|k|}M_{k}}, \quad
 \varphi\in{\cal{D}}\,^{\{M_p\}}_{L^{\infty},d}.
 \]
 Denote by $\widetilde{\!\widetilde{\cal{D}}}^{\{M_p\}}_{L^{\infty},d}$
  the l.c.s.  ${\cal{D}}^{\{M_p\}}_{L^{\infty},d}$
 equipped with the topology defined by the family
 $\{q_{g,(r_p)}\!: \ g\in\mathcal{C}_0,\ (r_p)\in\mathfrak{R}\}$ of semi-norms and
 the strong dual of $\widetilde{\!\widetilde{\cal{D}}}^{\{M_p\}}_{L^{\infty},d}$
 by $\left(\widetilde{\!\widetilde{\cal{D}}}^{\{M_p\}}_{L^{\infty},d}\right)_b'$.
 The connection between the strong dual of \, $\widetilde{\!\widetilde{\cal{D}}}^{\{M_p\}}_{L^{\infty},d}$
 and the space of integrable distributions ${{\cal{D}}}'^{\{M_p\}}_{L^{1},d}$ was studied in \cite{PP}.
 Such results have recently improved and it is shown that
\begin{equation}\label{xxx}
\left(\widetilde{\!\widetilde{\cal{D}}}^{\{M_p\}}_{L^{\infty},d}\right)_b' =
{{\cal{D}}}'^{\{M_p\}}_{L^{1},d},
\end{equation}
 as locally convex spaces (cf. \cite{PPV}, Prop\,5.3 and  Prop\,5.4).

These observations allow the authors to give in \cite{PP} the following definitions of
convolvability and
convolution of two Roumieu ultradistributions, analogous to the known
definitions of L. Schwartz for distributions (see \cite{Sch-Sem}):

 \textbf{Definition 4}. \label{Rconv}
 Let $S, T\in {\cal{D}}'^{\{M_{p}\}}_d$.
 If the following condition is satisfied:
  \[
 \begin{array}{llll}
 \!(\es)\qquad\;
 & \!\! \qquad \qquad\qquad
  V_{\varphi} := (S \otimes T)
   \,\varphi^{\triangle} \in {\cal{D}}'^{\{M_p\}}_{L^1,2d}
 & \qquad &\mbox{\ for\ all}\ \varphi\in {\cal{D}}^{\{M_{p}\}}_d,\:
 \end{array}
 \]
 where $\varphi^{\triangle}$ is the function of the  class ${\cal E}^{\{M_p\}}_{2d}$ defined by
 \begin{equation}\label{fitr}
 \varphi^{\triangle}(x, y):=\varphi(x+y),
 \qquad x, y \in\mathbb{R}^d,
 \end{equation}
 then we say that the Roumieu ultradistributions $S, T$ are {\em convolvable} in the sense of $(\es)$.
 Then the {\it convolution} $S \ast T$ of $S$ and $T$ in ${\cal{D}}'^{\{M_{p}\}}_d$
 is defined by
 \begin{equation}\label{P-P}
 \qquad\qquad\quad
 \langle S {\ast} T , \varphi \rangle_d
 := \langle V_{\varphi}, 1 \rangle_{2d},
 \qquad
 \varphi \in {\cal{D}}^{\{M_{p}\}}_d,
 \vspace{1.0mm}
 \end{equation}
where $V_{\varphi}$ is meant, according to (\ref{xxx}), as an element of the space
$\widetilde{\!\widetilde{\cal{D}}}'^{\{M_p\}}_{L^{\infty},2d}$
and  the constant function $1$ is meant
as an element of the space  $\widetilde{\!\widetilde{\cal{D}}}^{\{M_p\}}_{L^{\infty},2d}$.

For each $a>0$ consider the subset
$\triangle_a\!:=\{(x,y)\in \mathbb{R}^{2d}:\ |x+y|\leq a\}$
of $\mathbb{R}^{2d}$ and the following subspace of\,\,
  $\dot{\!\!\widetilde{\cal{B}}}^{\{M_p\}}_{2d}$:
 \begin{equation}\label{zzz}
 \dot{\cal{B}}^{\{M_p\}}_{2d} (\triangle_a)
 := \{ \varphi\in \dot{\!\!\widetilde{\cal{B}}}^{\{M_p\}}_{2d}\!\!:\
            \su \varphi \,\subseteq\, \triangle_a\}.
 \end{equation}
Denote by $\dot{\cal{B}}^{\{M_p\}}_{\triangle,2d}$
the inductive limit of the spaces defined in (\ref{zzz}):
\[
\dot{\cal{B}}^{\{M_p\}}_{\triangle,2d} := \lim_{\begin{subarray}{c}
   \displaystyle\longrightarrow \\
   {\scriptsize a\rightarrow\infty}
   \end{subarray}} \dot{\cal{B}}^{\{M_p\}}_{2d} (\triangle_a).
   \qquad \qquad
\]
The following result on equivalence of convolvability conditions for
Roumieu ultradistributions was proved in \cite{PP}:

\begin{theorem}\label{PP}
 Let $S, T \in {\cal{D}}'^{\{M_{p}\}}_d$.
 The following conditions are equivalent to condition $(\es)$ of convolvability for $S$ and $T$:

 \smallskip
 $(c_0)$ \quad  $S\otimes T \in \dot{\cal{B}}'^{\{M_p\}}_{\triangle,2d}$

\smallskip
  $(c'_1)$ \quad $S(\check{T} \ast \varphi)\in {\widetilde{\cal{D}}}'^{\{M_p\}}_{L^1,d}$
  for all $\varphi\in {\cal{D}}^{\{M_{p}\}}_d$
 and for every compact subset $K$ in $\mathbb{R}^d$, the mapping
  \[
 {\cal{D}}_{K,d}^{\{M_{p}\}}
 \times \dot{\!\widetilde{\cal{B}}}^{\{M_p\}}_d\!
 \ni (\varphi, \chi) \mapsto \left\langle S(\check{T} \ast \varphi), \chi\right\rangle \in\mathbb{C}
 \]
 is a continuous bilinear mapping;

\smallskip
 $(c'_2)$ \quad $(\varphi \ast \check{S}) T \in {\widetilde{\cal{D}}}'^{\{M_p\}}_{L^1,d}$
 for all $\varphi\in {\cal{D}}^{\{M_{p}\}}_d$
 and, for every compact subset $K$ in $\mathbb{R}^d$, the mapping
 \[
 {\cal{D}}_{K,d}^{\{M_{p}\}}
 \times \dot{\!\widetilde{\cal{B}}}^{\{M_p\}}_d
 \!\ni \!(\varphi, \chi)\mapsto\left\langle(\check{S} \ast \varphi) T, \chi\right\rangle\in\mathbb{C}
 \]
 is a continuous bilinear mapping;

\smallskip
$(c_3)$ \quad $(\check{S} \ast \varphi)(T \ast \psi) \in {L^1_d}$
 for all $\varphi, \psi\in {\cal{D}}^{\{M_{p}\}}_d$.
\end{theorem}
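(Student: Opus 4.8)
Since this equivalence is due to Pilipovi\'{c} and Prangoski \cite{PP} (with refinements in \cite{DPPV}), the plan is to recall how the argument is organised, around two facts: the identification (\ref{xxx}) of the strong dual of $\widetilde{\!\widetilde{\cal D}}^{\{M_p\}}_{L^\infty,d}$ with the space ${\cal D}'^{\{M_p\}}_{L^1,d}$ of integrable Roumieu ultradistributions, and the $\varepsilon$-tensor-product isomorphism $\dot{\cal B}^{\{M_p\}}_{d_1}\,\varepsilon\,\dot{\cal B}^{\{M_p\}}_{d_2}\cong\dot{\cal B}^{\{M_p\}}_{d_1}\,\widehat{\otimes}_{\varepsilon}\,\dot{\cal B}^{\{M_p\}}_{d_2}$, a Schwartz-kernel theorem for these spaces, which follows from Komatsu's results on the $\varepsilon$-product \cite{Kom3}; the auxiliary tools are the $\mathfrak R$-projective description of Proposition \ref{p1} and the product estimate of Proposition \ref{p2}. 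First I would prove $(\es)\Leftrightarrow(c_0)$. If $\su\varphi\subseteq B(0,a)$ then $\varphi^{\triangle}$ has support in $\triangle_a$, and for $\varphi\equiv 1$ on $B(0,a)$ one has $\chi=\chi\cdot\varphi^{\triangle}$ for every $\chi$ with $\su\chi\subseteq\triangle_a$; since multiplication by $\varphi^{\triangle}$ is a continuous operation on the relevant spaces by Proposition \ref{p2}, the restriction of $S\otimes T$ to each $\dot{\cal B}^{\{M_p\}}_{2d}(\triangle_a)$ is continuous --- i.e. $(c_0)$ holds --- exactly when $V_\varphi=(S\otimes T)\varphi^{\triangle}$ extends to a continuous functional on $\dot{\cal B}^{\{M_p\}}_{2d}$, i.e. $V_\varphi\in{\cal D}'^{\{M_p\}}_{L^1,2d}$ by (\ref{xxx}), which is $(\es)$.

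Next I would reformulate $S\otimes T$ through partial convolutions. For $\varphi\in\md_d$ the function $\check T\ast\varphi=\langle T(y),\varphi(\,\cdot+y)\rangle$ belongs to $\me_d$, so it is a multiplier on $\md_d$ and $S(\check T\ast\varphi)\in\mdd_d$; a Fubini-type computation on the tensor product yields, for $\chi\in\dot{\!\widetilde{\cal B}}^{\{M_p\}}_d$, the identity $\langle S(\check T\ast\varphi),\chi\rangle=\langle S\otimes T,(\chi\otimes 1_d)\,\varphi^{\triangle}\rangle$, whose right-hand test function is supported in $\triangle_a$ whenever $\su\varphi\subseteq B(0,a)$. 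Hence the content of $(c'_1)$ --- that $S(\check T\ast\varphi)\in{\widetilde{\cal D}}'^{\{M_p\}}_{L^1,d}$ for all $\varphi$, with the stated joint continuity in $(\varphi,\chi)$ --- becomes, after passing through the kernel theorem, precisely the continuity of $S\otimes T$ on the inductive limit $\dot{\cal B}^{\{M_p\}}_{\triangle,2d}$; this gives $(c_0)\Leftrightarrow(c'_1)$, and interchanging $S$ and $T$ gives $(c_0)\Leftrightarrow(c'_2)$.

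For $(c_3)$ I would use the identity $(\check S\ast\varphi)(x)\,(T\ast\psi)(x)=\langle S\otimes T,\varphi(x+\,\cdot\,)\,\psi(x-\,\cdot\,)\rangle$ together with the key observation that if $\su\varphi\subseteq B(0,a)$ and $\su\psi\subseteq B(0,b)$, then for every $x$ the function $(u,v)\mapsto\varphi(x+u)\psi(x-v)$ is supported in the single fixed strip $\triangle_{a+b}$. Testing the product against $\chi\in\md_d$, integrating in $x$, and estimating the resulting auxiliary kernel in $\dot{\cal B}^{\{M_p\}}_{2d}(\triangle_{a+b})$ by a constant multiple of $\|\chi\|_{\infty}$ (via Young's inequality), condition $(c_0)$ yields the $L^1$-membership of the product by $L^\infty$-duality; conversely, from the $L^1$-bounds on all such products one recovers the continuity estimate for $S\otimes T$ on $\dot{\cal B}^{\{M_p\}}_{\triangle,2d}$ by a density argument, for which the $\mathfrak R$-approximate units of Definition\,2 provide the appropriate mollifiers.

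The step I expect to be the main obstacle is the use of the kernel theorem to turn the bilinear-continuity conditions $(c'_1)$ and $(c'_2)$ into the single functional condition $(c_0)$: this requires the full $\varepsilon$-product identity for the non-normable spaces $\dot{\cal B}^{\{M_p\}}_d$, which in turn rests on their $\mathfrak R$-projective descriptions (Proposition \ref{p1}). The related implication $(c_3)\Rightarrow(c_0)$ is also delicate, since $(c_3)$ a priori controls $S\otimes T$ only against a thin family of test functions in the strips --- those built from the products $\varphi(x+\,\cdot\,)\,\psi(x-\,\cdot\,)$ --- so an approximation step, most naturally carried out with $\mathfrak R$-approximate units, is needed to upgrade this to continuity on all of $\dot{\cal B}^{\{M_p\}}_{\triangle,2d}$.
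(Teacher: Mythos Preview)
The paper does not give its own proof of this theorem: it is stated there as a result ``proved in \cite{PP}'' and is quoted without argument, exactly like Theorem~\ref{DPPV} which follows it. So there is no proof in the paper to compare your proposal against; the paper's ``approach'' is simply to cite Pilipovi\'{c} and Prangoski.

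Your sketch is a reasonable outline of the strategy in \cite{PP}: the equivalence $(\es)\Leftrightarrow(c_0)$ via multiplication by $\varphi^{\triangle}$, the passage to $(c'_1)$ and $(c'_2)$ through the Fubini/kernel-theorem identification, and the $L^1$-duality argument for $(c_3)$ are the right organising ideas. You correctly identify the delicate points --- the $\varepsilon$-tensor-product identification for the non-normable spaces $\dot{\cal B}^{\{M_p\}}_d$ and the density/approximation step needed for $(c_3)\Rightarrow(c_0)$ --- and you are honest that these are sketches rather than complete arguments. If you intend to actually prove the theorem rather than cite it, those two points would need to be filled in with the precise $\mathfrak{R}$-projective machinery from \cite{PP} and \cite{DPPV}; as written, your proposal is a plan, not a proof, but it is aligned with the original source.
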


 Dimovski, Pilipovi\'{c}, Prangoski  and Vindas
 modified conditions ($c'_1$) and ($c'_2$) and
 proved in \cite{DPPV}, under the assumption that the sequence $(M_p)$ satisfies (M.1), (M.2) and (M.3),
 that they are equiwalent to more transparent versions, as given in the following theorem:

\begin{theorem}\label{DPPV}
 Let $S, T \in {\cal{D}'}^{\{M_{p}\}}_d$.
 The following conditions are equivalent to condition $(\es)$ of convolvability for $S$ and $T$:

 \smallskip
  $(c_1)$ \quad $S(\check{T} \ast \varphi)\in
 {\cal{D}}'^{\{M_p\}}_{L^1,d}$
 for all $\varphi\in {\cal{D}}^{\{M_{p}\}}_d$;

 \smallskip
 $(c_2)$ \quad $(\varphi \ast \check{S}) T \in
{\cal{D}}'^{\{M_p\}}_{L^1,d}$
for all $\varphi\in {\cal{D}}^{\{M_{p}\}}_d$.
 \end{theorem}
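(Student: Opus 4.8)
This is the equivalence established by Dimovski, Pilipovi\'{c}, Prangoski and Vindas in \cite{DPPV}; below is one way to obtain it, building on the earlier Theorem~\ref{PP}. Since that theorem already gives $(\es)\Leftrightarrow(c'_1)$ and $(\es)\Leftrightarrow(c'_2)$, it suffices to prove the single equivalence $(c_1)\Leftrightarrow(c'_1)$: the equivalence $(\es)\Leftrightarrow(c_2)$ then follows by applying $(\es)\Leftrightarrow(c_1)$ to the pair $(T,S)$, noting that $(\es)$ is unchanged under the flip $(x,y)\mapsto(y,x)$ on $\mathbb{R}^{2d}$ and that, by commutativity of the convolution, $(c_1)$ for $(T,S)$ is literally $(c_2)$ for $(S,T)$. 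Reading (\ref{xxx}) in the refined locally convex form proved in \cite{PPV}, the underlying vector spaces of $\widetilde{\cal D}'^{\{M_p\}}_{L^1,d}$ and of ${\cal D}'^{\{M_p\}}_{L^1,d}$ coincide, and the tilde and the non-tilde versions of the space $\dot{\cal B}^{\{M_p\}}_d$ appearing in $(c'_1)$ agree as l.c.s.; hence $(c'_1)\Rightarrow(c_1)$ is immediate, and the real content is that the pointwise requirement $(c_1)$ already forces the continuity clause of $(c'_1)$.

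To recover that clause, fix a regular compact $K\subset\mathbb{R}^d$ and consider the linear map
\[
\Phi_K\colon\ {\cal D}^{\{M_p\}}_{K,d}\longrightarrow{\cal D}'^{\{M_p\}}_{L^1,d},\qquad\Phi_K(\varphi):=S(\check T\ast\varphi),
\]
which is well defined by $(c_1)$; here one first records that $\varphi\mapsto\check T\ast\varphi$ maps ${\cal D}^{\{M_p\}}_{K,d}$ continuously into a space of multipliers of ${\cal D}'^{\{M_p\}}_d$, so that $S(\check T\ast\varphi)$ is a well-defined element of ${\cal D}'^{\{M_p\}}_d$ depending continuously on $\varphi$. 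I would then check that the bilinear form $(\varphi,\chi)\mapsto\langle S(\check T\ast\varphi),\chi\rangle$ on ${\cal D}^{\{M_p\}}_{K,d}\times\dot{\cal B}^{\{M_p\}}_d$ is separately continuous --- continuity in $\chi$ for fixed $\varphi$ is precisely $(c_1)$ read through Definition~1, while continuity in $\varphi$ for fixed $\chi$ in the dense subspace ${\cal D}^{\{M_p\}}_d$ comes from $\langle S(\check T\ast\varphi),\chi\rangle=\langle S,\chi\,(\check T\ast\varphi)\rangle$ by composing the continuous maps just mentioned --- and deduce from this, together with the density of ${\cal D}^{\{M_p\}}_d$ in $\dot{\cal B}^{\{M_p\}}_d$, that $\Phi_K$ has closed graph. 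Since ${\cal D}^{\{M_p\}}_{K,d}$ is an ultrabornological $(DFS)$-space --- combine the projective description of Proposition~\ref{p1} with the countable inductive description (\ref{D-K}) --- and ${\cal D}'^{\{M_p\}}_{L^1,d}$ is, by Definition~1, the strong dual of the $(LB)$-space $\dot{\cal B}^{\{M_p\}}_d$, hence a Fréchet space and in particular webbed, the closed graph theorem applies and $\Phi_K$ is continuous. Finally, unwinding this continuity through the countable projective (Fréchet) structure of ${\cal D}'^{\{M_p\}}_{L^1,d}$ and the Banach steps of $\dot{\cal B}^{\{M_p\}}_d$ yields exactly the joint continuity of the bilinear form demanded in $(c'_1)$. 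Letting $K$ run over all regular compacts gives $(c_1)\Rightarrow(c'_1)$.

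Two alternative routes are worth keeping in reserve. One bypasses Theorem~\ref{PP} and proves $(\es)\Leftrightarrow(c_1)$ directly via the linear change of variables $(x,y)\mapsto(x+y,y)$ on $\mathbb{R}^{2d}$, under which $V_\varphi=(S\otimes T)\varphi^{\triangle}$ turns into the product of the transformed tensor product with $\varphi\otimes 1$ (constant in the second block of variables), so that (\ref{P-P}) reduces to the classical identity $\langle S\ast T,\varphi\rangle=\langle S,\check T\ast\varphi\rangle$ and integrability of $V_\varphi$ on $\mathbb{R}^{2d}$ is matched, by integration along the fibres of the projection onto the first $d$ coordinates, with integrability of the marginal $S(\check T\ast\varphi)$ on $\mathbb{R}^d$. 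The other route is sequential, through condition $(D)$ of Theorem~\ref{thI}: $(c_1)$ is equivalent to $\bigl(\langle S_x\otimes T_y,\,\p_n(x)\,\varphi(x+y)\rangle\bigr)_n$ being Cauchy for all $\varphi\in{\cal D}^{\{M_p\}}_d$ and all special $\mathfrak{R}$-approximate units $(\p_n)$ on $\mathbb{R}^d$, while $(\es)$ is equivalent to the analogous statement with a special $\mathfrak{R}$-approximate unit on $\mathbb{R}^{2d}$; one passes between the two by inserting the tensor products $\p_n\otimes\Xi_n$ of special approximate units --- which are again special $\mathfrak{R}$-approximate units on $\mathbb{R}^{2d}$ thanks to (\ref{RRap}), (\ref{Mpq}) and Lemma~\ref{Rk-ineq} --- and then disposing of the spurious factor $\Xi_n(y)\to1$. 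This last route is in the spirit of the sequential results developed further on in the paper.

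The main obstacle, in each version, is functional-analytic bookkeeping rather than any single estimate: one must have at hand that ${\cal D}^{\{M_p\}}_{K,d}$ and $\dot{\cal B}^{\{M_p\}}_d$ are complete, Schwartz, webbed and ultrabornological spaces, and that (\ref{xxx}) holds in the strong locally convex sense (the improvement of \cite{PPV} over \cite{PP}), so that ${\cal D}'^{\{M_p\}}_{L^1,d}$ genuinely is the strong-dual realisation in which the closed-graph (or fibrewise-integration) argument lands. A subsidiary technical point is that $\mathfrak{R}$ is uncountable, so the structural properties above have to be extracted from the countable inductive/projective descriptions rather than from the $\mathfrak{R}$-indexed projective description of Proposition~\ref{p1} alone; and one must verify carefully that $\check T\ast\varphi$ is indeed a multiplier of ${\cal D}'^{\{M_p\}}_d$ depending continuously on $\varphi$ and --- in the sequential route --- that removing the factor $\Xi_n(y)\to1$ is legitimate.
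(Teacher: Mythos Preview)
The paper does not give its own proof of this theorem: it is quoted verbatim as a result of Dimovski, Pilipovi\'{c}, Prangoski and Vindas \cite{DPPV}, and is used in the proof of Theorem~\ref{thm2} merely by citation (``The equivalence of conditions $(c_1)$, $(c_2)$ and $(\es)$ is given by Theorem~\ref{DPPV}''). So there is no in-paper argument to compare against; your proposal is an attempt to supply what the paper deliberately omits.

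As a sketch, your main route --- reduce via Theorem~\ref{PP} to $(c_1)\Leftrightarrow(c_1')$ and then use a closed-graph argument to upgrade the pointwise condition $(c_1)$ to the continuity clause in $(c_1')$ --- is sound in outline and is in fact close to the method actually used in \cite{DPPV}. The closed-graph step is legitimate: ${\cal D}^{\{M_p\}}_{K,d}$ is a $(DFS)$-space, hence ultrabornological, and in the Roumieu case $\dot{\cal B}^{\{M_p\}}_d$ is $(DFS)$ so that its strong dual ${\cal D}'^{\{M_p\}}_{L^1,d}$ is an $(FS)$-space, in particular webbed; your verification that the graph of $\Phi_K$ is closed (via density of ${\cal D}^{\{M_p\}}_d$ in $\dot{\cal B}^{\{M_p\}}_d$ and continuity of $\varphi\mapsto S(\check T\ast\varphi)$ into ${\cal D}'^{\{M_p\}}_d$) is the right idea. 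One point you glide over is the passage from ``$\Phi_K$ continuous'' to ``the bilinear form is \emph{jointly} continuous'': this is not automatic from continuity into a strong dual, but it does follow here because both factors ${\cal D}^{\{M_p\}}_{K,d}$ and $\dot{\cal B}^{\{M_p\}}_d$ are barrelled $(DF)$-spaces, so hypocontinuity (which you get by composing $\Phi_K$ with the evaluation pairing) implies joint continuity by Grothendieck's theorem. You should make that step explicit rather than hiding it in ``unwinding''. Your symmetry reduction for $(c_2)$ and the two alternative routes you list are reasonable, though the fibrewise-integration route and the sequential route each require nontrivial justification that you only gesture at.
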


In the next section we formulate certain sequential conditions of convolvability
of Roumieu ultradistributions, connected with Theorem\,\ref{thI} on integrability
in ${\cal{D}'}^{\{M_{p}\}}$ from section\,5.


\section{Sequential Definitions of Convolution in ${\cal{D}'}^{\{M_{p}\}}_d$}


The notion of  $\mathfrak{R}$-approximate unit makes us possible to consider several sequential definitions
of the convolution of Roumieu ultradistributions based on
corresponding sequential conditions of convolvability.  The conditions require that respective numerical sequences, corresponding
to a given pair of Roumieu ultradistributions via certain approximate units, are Cauchy sequence (Cauchy s. in short) for all approximate units from a given class.
The first definition of  this kind was given for the convolution of distributions by V. S. Vladimirov in \cite{VLA1}
and its equivalent versions were discussed in \cite{DiVo} and \cite{Kam}. Their counterparts for ultradistributions of Beurling type
 were discussed in \cite{KKP} (see also \cite{CKP}).

 We will prove in Theorem\,\ref{thm2} that all the sequential definitions are equivalent to the definition of the general
convolution of Roumieu ultradistributions in the sense of S. Pilipovi\'{c} and B. Prangoski \cite{PP}.
Our proof of  Theorem\,\ref{thm2} will be based on the integrability result proved in the
previous section.

 \textbf{Definition 5}. \label{convolv}
Let $S, T \in {\cal{D}'}^{\{M_{p}\}}_d$.
 We say that
 $S, T$ \textit{are convolvable
in the sense} of $({\ve})$, $({\p})$,
 $({\p}_1)$, $({\p}_2)$,
if the corresponding condition below holds for every $\varphi \in {\cal{D}}^{\{M_p\}}_d$, respectively:
 \[
 \begin{array}{lll}
 \!({\ve})& \;\;\ \left(\langle S \otimes T,\ \p_n \,\varphi^{\triangle}\rangle_{2d}\right) \;\;
 &\mbox{is\ a\ Cauchy\ s.\ for\ all}\,(\p_n)\in{\mathbb U}^{\{M_p\}}_{2d};\qquad\qquad\quad\quad
 \vspace{1.0mm}
  \\
  \!({\p}) & \;\;\ \left(\langle (\p^1_n S)\otimes (\p^2_n T),\varphi^{\triangle} \rangle_{2d}\right) \;
 &\mbox{is\ a\ Cauchy\ s.\ for\ all}\, (\p^1_n), (\p^2_n)\in{\mathbb U}^{\{M_p\}}_{d}; \quad
 \vspace{1.0mm}
  \\
 \!({\p}_1) & \;\;\ \left(\langle (\p_n S)\otimes T,\varphi^{\triangle} \rangle_{2d}\right) \;\;
 &\mbox{is\ a\ Cauchy\ s.\ for\ all}\,(\p_n)\in{\mathbb U}^{\{M_p\}}_{d};\qquad\qquad\quad\quad
 \vspace{1.0mm}
 \\
 \!({\p}_2) & \;\;\ \left(\langle S\otimes (\p_n T),\varphi^{\triangle} \rangle_{2d}\right) \;\;
 &\mbox{is\ a\ Cauchy\ s.\ for\ all}\,(\p_n)\in{\mathbb U}^{\{M_p\}}_{d}.\qquad\qquad\quad\quad
 \vspace{.2mm} \\
 \end{array}
 \]

 \textbf{Definition 6}. \label{convo}
If $S, T\in {\cal{D}}'^{\{M_{p}\}}_d$
are convolvable in the sense of 
     $({\ve})$, $(\p)$, $({\p}_1)$, $({\p}_2)$,  respectively,
then the \textit{convolution of $S$ and $T$} in ${\cal{D}}'^{\{M_{p}\}}_d$
    in the respective sense is defined by the corresponding
formula below:
 \smallskip
 \begin{eqnarray*}
\displaystyle \langle S\!\stackrel{\ve}{\ast}\!T,\varphi\rangle_d\;\, &\!:=&
        \lim_{n\rightarrow \infty}\langle S \otimes T , \p_n\,\varphi^{\triangle} \rangle_{2d}, \quad
                                        \quad\,\;\ \varphi \in \md_d,\;\;\, (\p_n)\in{\mathbb U}^{\{M_p\}}_{2d};\\
  \bigskip
 \vspace{1.5cm}
  \displaystyle \langle
   S\!\stackrel{\pj}{\ast}\!T, \varphi\rangle_d\;\,
               &\!:=&
        \lim_{n\rightarrow \infty}\langle  (\p^1_n S)\!\otimes\!(\p^2_n T),\varphi^{\triangle}\rangle_{2d}, \;
                                        \;\, \varphi\in \md_d,\;\; (\p^1_n), (\p^2_n)\!\in\!{\mathbb U}^{\{M_p\}}_d; \\
 \displaystyle \langle
      S\!\stackrel{{\pj}_1}{\ast}\!T,\varphi\rangle_d\,
              &\!:=&
        \lim_{n\rightarrow \infty}\langle (\p_nS)\otimes  T, \varphi^{\triangle} \rangle_{2d},
                                        \quad\;\;\,\;\; \varphi \in \md_d,\;\;\,(\p_n)\in{\mathbb U}^{\{M_p\}}_d ; \\
  \bigskip
\vspace{1.5cm}
 \displaystyle
        \langle
     S\!\stackrel{{\pj}_2}{\ast}\!T,\varphi\rangle_d\,
              &\!:=&
        \lim_{n\rightarrow \infty} \langle S\otimes(\p_n T),  \varphi^{\triangle} \rangle_{2d},
                                        \quad\;\;\;\;\,\varphi \in \md_d,\;\;\, (\p_n)\in{\mathbb U}^{\{M_p\}}_{d};
 \end{eqnarray*}
respectively.


 \begin{remark} \label{rem5}
Condition (${\es}$) of convolvability in Definition\,4
guarantees, by the considerations of Pilipovi\'{c} and Prangoski from section\,4 in \cite{PP}, that the convolution $S {\ast} T$
in the sense of (\ref{P-P}) exists in ${\cal{D}}'^{\{M_{p}\}}_d$. It follows from Theorem\,\ref{thm2}, formulated below,
that convolvability conditions $({\ve})$, $(\p)$, $({\p}_1)$ and  $({\p}_2)$  guarantee that the corresponding sequential
convolutions, defined in Definition\,6,
exist in ${\cal{D}}'^{\{M_{p}\}}_d$.

In addition, to the sequential conditions of convolvability in ${\cal{D}}'^{\{M_{p}\}}_d$, given in Definition\,5,
one may consider also the conditions $(\overline{\ve})$, $(\overline{\p})$, $(\overline{\p}_1)$, $(\overline{\p}_2)$
being the modifications of the above ones consisting in replacing the classes ${\mathbb U}^{\{M_p\}}_{2d}$ and
${\mathbb U}^{\{M_p\}}_{d}$ of $\mathfrak{R}$-approximate units by the classes $\overline{\mathbb U}^{\{M_p\}}_{2d}$ and
$\overline{\mathbb U}^{\{M_p\}}_{d}$ of special approximate units, respectively. The modified conditions lead to additional sequential
definitions of the convolution in ${\cal{D}}'^{\{M_{p}\}}_d$ which are counterparts of the known sequential definitions of the
convolution of distributions (see \cite{VLA1}, \cite{DiVo} and \cite{Kam}). It follows from Theorem\,\ref{thI} that they
are equivalent to all the conditions listed in Definition\,6.
\end{remark}

\begin{remark}\label{rem6}
The convolution of Roumieu ultradistributions in ${\cal{D}}'^{\{M_{p}\}}_d$ investigated in \cite{PP} and \cite{DPPV}
and its sequential versions discussed in this paper is a general notion. It embraces various particular cases, e.g.
expressed in terms of supports of given Roumieu ultradistributions. The discussion of such sufficient conditions
will be given in forthcoming papers.

\smallskip
Each of the
sequential definitions
of the convolution of $S$ and $T$,
under the corresponding
conditions, do not depend on the choice of an approximate unit from a given class.
\end{remark}

 \begin{theorem}\label{thm2}
 Let $S, T \in \mdd_d$.
  All the conditions formulated in Definition\,\ref{convolv} are equivalent to
 the condition $({\es})$ of convolvability of ultradistributions.
 If one of these conditions is satisfied,
 then all corresponding convolutions of Roumieu ultradistributions $S$ and $T$
 exist in $\mdd_d$ and all the convolutions are equal:
 \[
 S\!\stackrel{{\ve}}{\ast}\!T = S\!\stackrel{\pj}{\ast}\!T =  S\!\stackrel{{\pj}_1}{\ast}\!T
 =  S\!\stackrel{{\pj}_2}{\ast}\!T = S {\ast} T.
 \]
 \end{theorem}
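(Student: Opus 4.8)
The plan is to establish a cycle of implications connecting each sequential condition in Definition~5 to the functional condition $(\es)$, using Theorem~\ref{thI} as the bridge between ``the relevant numerical sequences are Cauchy for all $\mathfrak{R}$-approximate units'' and ``the relevant Roumieu ultradistribution is integrable.'' The central observation is that for fixed $\varphi\in{\cal D}^{\{M_p\}}_d$, the expressions appearing in $(\ve)$, $(\p)$, $(\p_1)$, $(\p_2)$ are precisely the pairings of the ultradistribution $V_\varphi = (S\otimes T)\varphi^\triangle \in {\cal D}'^{\{M_p\}}_{2d}$ (resp.\ of $S(\check T\ast\varphi)$ or $(\varphi\ast\check S)T$ in dimension $d$) against members of a class of $\mathfrak{R}$-approximate units. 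So each sequential condition says exactly that the corresponding ultradistribution satisfies condition $(C)$ (or $(D)$) of Theorem~\ref{thI}, hence lies in the appropriate ${\cal D}'^{\{M_p\}}_{L^1}$ space; and conversely, membership in that space forces the Cauchy property.

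Concretely, I would proceed as follows. \textbf{Step 1 ($(\ve)\Leftrightarrow(\es)$).} Fix $\varphi$. Note $(\es)$ asserts $V_\varphi\in{\cal D}'^{\{M_p\}}_{L^1,2d}$, while $(\ve)$ asserts $(\langle V_\varphi,\p_n\rangle_{2d})$ is Cauchy for every $(\p_n)\in{\mathbb U}^{\{M_p\}}_{2d}$. By Theorem~\ref{thI} (equivalence $(A)\Leftrightarrow(C)$, applied in $\mathbb{R}^{2d}$, using Remark~\ref{remK}), these are equivalent, provided one checks that pairing $V_\varphi$ against $\p_n$ makes sense; here one uses that $\varphi^\triangle\in{\cal E}^{\{M_p\}}_{2d}$ and $\p_n\in{\cal D}^{\{M_p\}}_{2d}$, so $\p_n\varphi^\triangle\in{\cal D}^{\{M_p\}}_{2d}$ and $\langle S\otimes T,\p_n\varphi^\triangle\rangle_{2d}=\langle V_\varphi,\p_n\rangle_{2d}$. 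The limit, when it exists, equals $\langle V_\varphi,1\rangle_{2d}=\langle S\ast T,\varphi\rangle_d$, giving $S\stackrel{\ve}{\ast}T=S\ast T$. \textbf{Step 2 ($(\p_1)\Leftrightarrow(c_1)$, $(\p_2)\Leftrightarrow(c_2)$).} For $(\p_1)$: rewrite $\langle(\p_n S)\otimes T,\varphi^\triangle\rangle_{2d}$ using the Fubini-type identity $\langle(\p_n S)\otimes T,\varphi^\triangle\rangle_{2d}=\langle S(\check T\ast\varphi),\p_n\rangle_d$ (after identifying the $y$-integration with convolution), so the condition becomes exactly condition $(C)$ of Theorem~\ref{thI} for the ultradistribution $S(\check T\ast\varphi)\in{\cal D}'^{\{M_p\}}_d$, i.e.\ $S(\check T\ast\varphi)\in{\cal D}'^{\{M_p\}}_{L^1,d}$, which is $(c_1)$ of Theorem~\ref{DPPV}. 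Symmetrically for $(\p_2)$ and $(c_2)$. \textbf{Step 3 ($(\p)\Leftrightarrow(\p_1)$ or directly to $(\es)$).} Write $\langle(\p^1_n S)\otimes(\p^2_n T),\varphi^\triangle\rangle_{2d}$; one shows that taking the two sequences independently is no stronger than taking them equal or taking one of them trivial, by exploiting that the product of two $\mathfrak{R}$-approximate units (suitably reparametrised via $\lambda(r_p)$ as in~(\ref{lamrp}) and Proposition~\ref{p2}) is again controlled, and that a sequence $(\p^1_n\otimes\p^2_n)$ on $\mathbb{R}^{2d}$ behaves like an element of ${\mathbb U}^{\{M_p\}}_{2d}$ tested against $V_\varphi$. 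Then invoke Step~1.

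\textbf{Step 4 (equality of all convolutions).} Once every condition is shown equivalent to $(\es)$, Theorem~\ref{PP} and Theorem~\ref{DPPV} identify them all with the Pilipovi\'c--Prangoski convolvability; and in each case the limit defining the sequential convolution is computed to be $\langle V_\varphi,1\rangle_{2d}=\langle S\ast T,\varphi\rangle_d$, independently of the chosen approximate unit (this uses the ``Cauchy for all units'' hypothesis together with a standard interleaving argument: given two approximate units, interleave them into a third one in the same class). This yields $S\stackrel{\ve}{\ast}T=S\stackrel{\pj}{\ast}T=S\stackrel{\pj_1}{\ast}T=S\stackrel{\pj_2}{\ast}T=S\ast T$.

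The main obstacle I anticipate is Step~2, the rigorous justification of the Fubini-type identity $\langle(\p_n S)\otimes T,\varphi^\triangle\rangle_{2d}=\langle S(\check T\ast\varphi),\p_n\rangle_d$ at the level of Roumieu ultradistributions: one must verify that $\check T\ast\varphi\in{\cal E}^{\{M_p\}}_d$ with the right seminorm estimates so that $S(\check T\ast\varphi)$ is a well-defined ultradistribution, and that the tensor product pairing can indeed be disintegrated this way — this is where the non-quasianalyticity condition (M.3) and the stability (M.2) enter, ensuring $\check T\ast\varphi$ is ultradifferentiable of class $\{M_p\}$ and that multiplication by $\p_n\in{\cal D}^{\{M_p\}}_d$ keeps everything inside the test-function space with uniform control via~(\ref{RRap}). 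A secondary subtlety is making sure, in Step~3, that the class ${\mathbb U}^{\{M_p\}}$ is closed (up to the reparametrisations permitted by Remarks~\ref{rem0rn} and~(\ref{lamrp})) under the operations of taking tensor products and products of two units, for which Proposition~\ref{p2} and Lemma~\ref{Rk-ineq} are the key tools.
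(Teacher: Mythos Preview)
Your proposal is correct and follows essentially the same route as the paper: the scheme there is the cycle $(\p)\Rightarrow(\p_1)\Rightarrow(c_1)\Rightarrow(\es)\Leftrightarrow(\ve)\Rightarrow(\p)$ (with the symmetric branch through $(\p_2)$ and $(c_2)$), using exactly Theorem~\ref{thI} for $(\es)\Leftrightarrow(\ve)$, the Fubini identity $\langle(\p_nS)\otimes T,\varphi^{\triangle}\rangle_{2d}=\langle S(\check T\ast\varphi),\p_n\rangle_d$ for $(\p_1)\Rightarrow(c_1)$, and the observation $(\p^1_n\otimes\p^2_n)\in{\mathbb U}^{\{M_p\}}_{2d}$ for $(\ve)\Rightarrow(\p)$. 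The one step the paper makes more explicit than your Step~3 is $(\p)\Rightarrow(\p_1)$: it argues that if the diagonal limit exists for \emph{all} pairs $(\p^1_n),(\p^2_n)$ then the double limit over $(i,j)$ exists (via a subsequence contradiction---your ``interleaving''), and then the iterated limit with $j\to\infty$ first collapses the $\p^2_j$ factor to $1$ in ${\cal E}^{\{M_p\}}_d$, yielding $(\p_1)$.
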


\begin{proof}
 We will prove the equivalence of convolvability conditions given in Definitions \ref{convolv} and  \ref{Rconv} and in Theorem \ref{DPPV}
 according to the following scheme of implications:

 \begin{displaymath}
 \hspace{2.2cm}\begin{array}{ccccccccccc}
  & & ({\p}_1) & \longrightarrow & (\ce_1) & & & & & &   \\
  &\nearrow & & & & \searrow & & & & &     \\
 ({\p}) & & & & & &  (\es) & \longrightarrow &({\ve}) & \longrightarrow & ({\p})   \\
  & \searrow & & & & \nearrow & &  & & &  \\
  & & ({\p}_2) & \longrightarrow & (\ce_2) & & & & & &
  \end{array}
 \end{displaymath}

 Assume condition $({\p})$ for a fixed $\varphi\in \md_d$.
 This means that the following limit exists for all
 $(\p^1_n), (\p^2_n)\in {\mathbb U}^{\{M_p\}}_d$:
 \[
 \lim_{n\rightarrow \infty}\langle (\p^1_n S)\!\otimes\!(\p^2_n T),\varphi^{\triangle}\rangle_{2d} = \alpha
 \]
 for a certain $\alpha\in\mathbb{C}$ which does not depend on the sequences
 $(\p^1_n)$ and $(\p^2_n)$.
 Hence the double limit
 \[
 \lim_{i, j\to\infty} \langle(\p^1_i S)\!\otimes\!(\p^2_j T),\varphi^{\triangle}\rangle_{2d}= \alpha
 \]
 also exists for all $(\p^1_i), (\p^2_j)\in {\mathbb U}^{\{M_p\}}_d$.
 Indeed, if the last equality is not true, then there exist increasing sequences of positive integers
 $(i_n)$ and $(j_n)$ such that
 \[
 \left|\langle(\p^1_{i_n} S)\!\otimes\!(\p^2_{j_n} T),\varphi^{\triangle}\rangle - \alpha\right| > \varepsilon,
 \]
 for all $n\in\mathbb{N}$, but this conducts to contradiction with condition  $({\p})$,
 because  $(\p^1_{i_n}), (\p^2_{j_n})\in {\mathbb U}^{\{M_p\}}_d$.
 Thus under the existence of the iterated limits and definition of approximate unit we obtain
 \[
 \alpha = \lim_{n\rightarrow \infty}\langle (\p^1_nS)\otimes  T, \varphi^{\triangle} \rangle_{2d},
 \]
 which ensure  condition $({\p}_1)$.

 Next for a fixed $\varphi\in \md_d$ and arbitrary  $(\p_n) \in {\mathbb U}^{\{M_p\}}_d$
 we may write
 \[
 \langle (\p_nS)\otimes  T, \varphi^{\triangle} \rangle_{2d} =
 \langle(\p_nS)\ast T, \varphi\rangle_{d} = \langle \p_nS, \varphi\ast\check{T}\rangle_{d}
 = \langle S (\check{T}\ast\varphi), \p_n \rangle_d, \quad  n\in\mathbb{N}
 \]
 for any function $\p_{n}\in \md_d$.
  Then according to Theorem\,\ref{thI} and condition $({\p}_1)$ we have the equality
 \[
 \lim_{n\rightarrow \infty} \langle (\p_nS)\otimes  T, \varphi^{\triangle} \rangle_{2d}
 = \langle S (\check{T}\ast\varphi), 1 \rangle
 \]
 because the definition of family ${\mathbb U}^{\{M_p\}}_d$. Moreover, this implies integrability of ultradistribution
 $S (\check{T}\ast\varphi)$ for all $\varphi\in \md_d$.

 The proof of implications $({\p})\Rightarrow({\p}_2)\Rightarrow(c_2)$ is analogous. The equivalence of conditions
 ($c_1$), ($c_2$) and (${\es}$) is given by Theorem \ref{DPPV}. From Theorem \ref{thI} with
 $V := (S \otimes T)\,\varphi^{\triangle}$ and  $(\p_n)\in{\mathbb U}^{\{M_p\}}_{2d}$ we obtain equivalence
 of conditions (${\es}$) and (${\ve}$).

 It remains to note that the implication  $({\ve})\Rightarrow({\p})$ is obvious, since for every $\varphi\in \md_d$
 and arbitrary  $(\p^1_n), (\p^2_n) \in {\mathbb U}^{\{M_p\}}_d$ the functions $(\p^1_{n}\otimes \p^2_{n})\varphi^{\triangle}$
 belong to $\md_{2d}$ for $n\in \mathbb{N}$ and
 \[
 \langle (\p^1_n S)\!\otimes\!(\p^2_n T),\varphi^{\triangle}\rangle_{2d} =
 \langle S\!\otimes\!T,\varphi^{\triangle}(\p^1_{n}\otimes \p^2_{n})\rangle_{2d}
 \]
 according to the notion of tensor products in spaces
 $\md_d$ and $\mdd_d$ and the respective topological
 isomorphisms described in Theorems 2.1 and 2.3 in \cite{Kom2}.
 Condition (${\ve}$) guaranties that the sequence
 $\left(\langle (\p^1_n S)\!\otimes\!(\p^2_n T),\varphi^{\triangle}\rangle_{2d}\right)$ is Cauchy
 because $(\p^1_{n}\otimes \p^2_{n})\in {\mathbb U}^{\{M_p\}}_{2d}$.

\end{proof}

\begin{remark} \label{remstr}
In addition, one may consider the sequential conditions
$(\overline{\ve})$, $(\overline{\p})$, $(\overline{\p}_1)$, $(\overline{\p}_2)$
of convolvability of $S, T\in\mdd_d$  and the corresponding convolutions
$S\stackrel{\overline{\ve}}{\ast}T$, $S\!\stackrel{\overline{\p}}{\ast}\!T$,
$S\!\stackrel{\overline{\p} _1}{\ast}\!T$, $S\!\stackrel{\overline{\p}_2}{\ast}\!T$
of $S$ and $T$, replacing the classes ${\mathbb U}^{\{M_p\}}_{2d}$ and
${\mathbb U}^{\{M_p\}}_{d}$ of $\mathfrak{R}$-approximate units by the classes $\overline{\mathbb U}^{\{M_p\}}_{2d}$ and
$\overline{\mathbb U}^{\{M_p\}}_{d}$ of special $\mathfrak{R}$-approximate units.
The modified conditions of convolvability and the corresponding sequential definitions of the convolution
of $S$ and $T$ are equivalent to those considered above.

The equivalence of conditions $({\es})$, $(\overline{\ve})$, $(\overline{\p})$, $(\overline{\p}_1)$ and $(\overline{\p}_2)$
of convolvability follows in the same manner as the equivalence of conditions $({\es})$,
$({\ve})$, $({\p})$, $({\p}_1)$ and $({\p}_2)$ proved in Theorem\,\ref{thm2} above. The equality
of the corresponding convolutions is an easy consequence of this result.
\end{remark}

\begin{remark}\label{comut}
It is easy to see that the convolution  of Roumieu ultradistributions
is commutative, i.e. $S{\ast}T = T{\ast}S$ for
$S, T \in \mdd_d$.
\end{remark}


\section{Ultradifferential Property of the Convolution}


Let us consider an ultradifferential operator $P(D)$ defined by Komatsu in \cite{Kom1} as follows:

 \textbf{Definition 7}. \label{ultD}
An operator of the form
 \begin{equation}\label{PD}
P(D) = \sum_{k\in\mathbb{N}_0^d} c_kD^k, \quad c_k\in\mathbb{C}
 \end{equation}
is called an \textit{ultradifferential operator of class} $\{M_p\}$ if for every $L>0$ there is
a constant $C_L$ such that
\begin{equation}\label{ckPD}
|c_k| \leq C_L\frac{L^k}{M_k}, \qquad k\in\mathbb{N}_0^d.
 \end{equation}
The estimation in (\ref{ckPD}) means that for every $L>0$ we have $\sup_k\left(L^{-k}{M_k|c_k|}\right)< \infty$.
Then according to Lemma\,\ref{Kom} Part\,($I\!I$) there is a sequence $(u_p)\in\mathfrak{R}$ such that
$\sup_k\left(U_{|k|}M_k|c_k|\right)< \infty$ where $U_k=\prod_{p\leq k}u_p$. In other words an ultradifferential operator
of the form (\ref{PD}) is of class $\{M_p\}$ if there are $C>0$ and $(u_p)\in\mathfrak{R}$ such that
\begin{equation}\label{ckPDu}
|c_k| \leq \frac{C}{U_{|k|}M_k}, \qquad k\in\mathbb{N}_0^d.
 \end{equation}

Due to Komatsu \cite{Kom1}, if $(M_p)$ satisfies condition (M.2), then $P(D)$ defines the respective continuous mappings
$\md_d\rightarrow\md_d$ and $\mdd_d\rightarrow\mdd_d$. Moreover the series
$ \displaystyle P(D)S = \sum_{k\in\mathbb{N}_0^d} c_kD^kS$ converges absolutely in $\mdd_d$ for every  $S\in \mdd_d$ .

In Theorem\,\ref{thm3} below we prove an important and non-trivial property of the convolution of
Roumieu ultradistributions. In the proof
we will need the following very useful result from \cite{PP}:

\begin{lemma}\label{PiPr}
For every sequence $(s_{\!p})\in\mathfrak{R}$ there exists a sequence $(r_{\!p})\in\mathfrak{R}$ such that $r_{\!p}\leq s_{\!p}$
for $p\in\mathbb{N}$ and
\begin{equation}\label{inPP}
R_{p+q} \leq 2^{p+q}R_pR_q  \quad \mbox{for \ all} \quad p, q\in\mathbb{N}_0.
 \end{equation}
 \end{lemma}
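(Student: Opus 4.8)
The plan is to build $(r_p)$ by truncating $(s_p)$ against a built‑in geometric growth rate. Explicitly, I would set $r_0:=1$ and
\[
r_p := \min\!\Big( s_p,\ \tfrac{p+1}{p}\, r_{p-1}\Big),\qquad p\in\mathbb{N}.
\]
Then $r_p\le s_p$ for every $p\in\mathbb{N}$ is immediate from the definition, and I would check $(r_p)\in\mathfrak{R}$ in two steps. Monotonicity follows by induction: since $\tfrac{p+1}{p}r_{p-1}\ge r_{p-1}$ and $s_p\ge s_{p-1}\ge r_{p-1}$, the value $r_p$ is a minimum of two quantities $\ge r_{p-1}$, so $r_{p-1}\le r_p$. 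The only point that is not automatic is $r_p\to\infty$, which I would prove by contradiction: if $r_p\le B$ for all $p$, choose $P$ with $s_p\ge 2B$ for $p\ge P$; then for $p\ge P$ one has $\tfrac{p+1}{p}r_{p-1}\le 2B\le s_p$, so the recursion forces $r_p=\tfrac{p+1}{p}r_{p-1}$, whence $r_p=r_{P-1}\,\tfrac{p+1}{P}\to\infty$ — a contradiction. (If the convention for $\mathfrak{R}$ requires strict increase, one first replaces $(s_p)$ by a strictly increasing sequence of $\mathfrak{R}$ lying below it, which is routine.)

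Next I would establish \eqref{inPP}. By construction $r_i/r_{i-1}\le (i+1)/i$ for every $i$, so by telescoping
\[
\frac{r_{p+j}}{r_j}=\prod_{i=j+1}^{p+j}\frac{r_i}{r_{i-1}}\le \prod_{i=j+1}^{p+j}\frac{i+1}{i}=\frac{p+j+1}{j+1}.
\]
Fixing $p,q\in\mathbb{N}_0$ and assuming, by symmetry of \eqref{inPP}, that $q\le p$, a direct computation (as in the proof of Lemma~\ref{Rk-ineq}) gives $R_{p+q}/(R_pR_q)=\prod_{j=1}^{q} r_{p+j}/r_j$, hence
\[
\frac{R_{p+q}}{R_pR_q}\le\prod_{j=1}^{q}\frac{p+j+1}{j+1}=\frac{(p+q+1)!}{(p+1)!\,(q+1)!}=\frac{1}{p+q+2}\binom{p+q+2}{p+1}\le 2^{\,p+q},
\]
where the last inequality uses $\binom{p+q+2}{p+1}\le 2^{p+q+2}$ together with $p+q+2\ge 4$ when $p+q\ge2$, while the small cases $p+q\le1$ are checked by inspection. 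This is exactly \eqref{inPP}.

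The step I expect to be the crux is the choice of the truncating factor $\tfrac{p+1}{p}$: merely capping, e.g. taking $r_p=\min(s_p,p+1)$, does not work, because a single large jump of $(s_p)$ produces a whole cluster of large ratios $r_{p+j}/r_j$ and destroys \eqref{inPP}. The role of the recursive smoothing is precisely to enforce the uniform geometric bound $r_i/r_{i-1}\le (i+1)/i$, and it is this bound that collapses the telescoping product into a central binomial coefficient, which is what $2^{p+q}$ controls. Everything else — the factorization of $R_{p+q}/(R_pR_q)$, the telescoping, and the binomial estimate — is elementary.
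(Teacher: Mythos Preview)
The paper does not give its own proof of Lemma~\ref{PiPr}; it simply quotes the result from \cite{PP} and uses it in the proof of Theorem~\ref{thm3}. So there is nothing in the paper to compare your argument against.

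That said, your proof is correct and self-contained. The recursive definition $r_p:=\min\bigl(s_p,\tfrac{p+1}{p}r_{p-1}\bigr)$ indeed forces the uniform ratio bound $r_i/r_{i-1}\le (i+1)/i$, and the identity $R_{p+q}/(R_pR_q)=\prod_{j=1}^{q} r_{p+j}/r_j$ (valid for $q\le p$ with the convention $r_0=1$) then reduces \eqref{inPP} to the elementary binomial estimate you state. The divergence argument for $r_p\to\infty$ is also fine: once $s_p\ge 2B$ the minimum is achieved by the second term, and the telescoping product $r_{P-1}\tfrac{p+1}{P}$ diverges. The small cases $p+q\in\{0,1\}$ are trivial since then $R_{p+q}/(R_pR_q)=1$. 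Your parenthetical remark about strict monotonicity is adequate; the paper's definition of $\mathfrak{R}$ only requires the sequence to increase to infinity, so no adjustment is needed.

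In short: your argument stands on its own and could replace the bare citation to \cite{PP} without any gap.
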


 \begin{theorem}\label{thm3}
 Let $S, T \in \mdd_d$ be convolvable and let $P(D)$ be an ultradifferential operator of class $\{M_p\}$.
 Then $P(D)S$ and $T$ as well as $S$ and $P(D)T$ are convolvable and, moreover,
 \begin{equation}\label{PDST}
 P(D)(S\ast T) = (P(D)S)\ast T = S\ast(P(D)T).
 \end{equation}
 \end{theorem}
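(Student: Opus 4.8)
The plan is to reduce everything to the sequential characterization of convolvability, namely condition $(\p_1)$ (equivalently $(\p_2)$) of Definition\,5, which by Theorem\,\ref{thm2} is equivalent to $(\es)$, and then exploit the fact that $P(D)$ acts continuously on $\mdd_d$ and commutes with $D^k$. First I would fix $\varphi\in\md_d$ and an $\mathfrak{R}$-approximate unit $(\p_n)\in{\mathbb U}^{\{M_p\}}_d$, and look at the sequence
\[
\langle (\p_n\,P(D)S)\otimes T,\varphi^{\triangle}\rangle_{2d}.
\]
The obstruction to replacing $\p_n\,P(D)S$ by $P(D)(\p_n S)$ is the Leibniz remainder: $\p_n P(D)S - P(D)(\p_n S)$ involves derivatives of $\p_n$, which do \emph{not} vanish as $n\to\infty$ in $\mdd_d$ (only $\p_n\to 1$ in $\me_d$). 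So the heart of the argument is to show that, when tested against $\varphi^{\triangle}$ and paired with $T$, this remainder tends to $0$. This is exactly the kind of estimate for which the class $\mathfrak{R}$ and Lemma\,\ref{PiPr} are designed: one writes $\p_n P(D)S - P(D)(\p_n S) = \sum_{k}c_k\sum_{0\neq j\leq k}\binom{k}{j}(D^j\p_n)D^{k-j}S$ (formally), and the bounds \eqref{ckPDu} on $c_k$ together with the uniform bound \eqref{RRap} on $\|\p_n\|_{(r_p)}$ and the submultiplicativity $R_{p+q}\leq 2^{p+q}R_pR_q$ from Lemma\,\ref{PiPr} let one absorb the binomial factors and the derivatives of $\p_n$ into a single $\mathfrak{R}$-type estimate.

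Concretely, I would argue as follows. By Theorem\,\ref{thm2}, convolvability of $S,T$ gives condition $(\p_1)$; in particular $S(\check T\ast\varphi)\in\mdd_{L^1,d}$ for every $\varphi\in\md_d$, and $\langle(\p_n S)\otimes T,\varphi^{\triangle}\rangle_{2d}=\langle S(\check T\ast\varphi),\p_n\rangle_d\to\langle S(\check T\ast\varphi),1\rangle$. Now I would compute
\[
\langle (\p_n\,P(D)S)\otimes T,\varphi^{\triangle}\rangle_{2d}
= \langle P(D)S,\ \p_n(\check T\ast\varphi)\rangle_d
= \langle S,\ {}^{t}\!P(D)\big(\p_n(\check T\ast\varphi)\big)\rangle_d,
\]
where ${}^{t}\!P(D)=\sum_k c_k(-1)^{|k|}D^k$ is the transpose, again an ultradifferential operator of class $\{M_p\}$. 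Applying the Leibniz rule inside, ${}^{t}\!P(D)(\p_n\psi)$ with $\psi=\check T\ast\varphi\in\me_d$ splits as $\p_n\,{}^{t}\!P(D)\psi$ plus a remainder $\rho_n$ built from the $D^j\p_n$, $j\neq 0$. The main obstacle is precisely showing $\langle S,\rho_n\rangle_d\to 0$: here I would use Remark\,\ref{rem0rn} to pass to an $\mathfrak{R}$-sequence as small as needed, use Lemma\,\ref{PiPr} to split $R$-factors across the binomial expansion, and use that $S$ acts continuously on $\md_{K,d}$ for $K$ a neighbourhood of $\su\varphi$ where all $\p_n$ equal a fixed function eventually (one may take $(\p_n)\in\overline{\mathbb U}^{\{M_p\}}_d$ by Remark\,\ref{remstr}, which kills the remainder on any fixed compact set entirely, leaving only the behaviour at infinity, controlled by integrability). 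With the remainder gone, $\langle (\p_n\,P(D)S)\otimes T,\varphi^{\triangle}\rangle_{2d}\to\langle S,{}^{t}\!P(D)(\check T\ast\varphi)\rangle_d=\langle P(D)S,\check T\ast\varphi\rangle_d$, a convergent sequence independent of $(\p_n)$; hence $P(D)S$ and $T$ are convolvable in the sense of $(\p_1)$, so by Theorem\,\ref{thm2} in the sense of $(\es)$.

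Finally I would identify the limits. Using ${}^{t}\!P(D)(\check T\ast\varphi)=\check T\ast({}^{t}\!P(D)\varphi)$ and the standard identity $\langle P(D)S,\psi\rangle=\langle S,{}^{t}\!P(D)\psi\rangle$, one gets
\[
\langle (P(D)S)\ast T,\varphi\rangle_d=\langle S\ast T,\ {}^{t}\!P(D)\varphi\rangle_d=\langle P(D)(S\ast T),\varphi\rangle_d
\]
for all $\varphi\in\md_d$, which is the first equality in \eqref{PDST}. The second equality, $S\ast(P(D)T)=P(D)(S\ast T)$, follows either by the symmetric argument using condition $(\p_2)$, or more cheaply by commutativity of the convolution (Remark\,\ref{comut}) applied to the first equality with the roles of $S$ and $T$ interchanged. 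The only genuinely delicate point throughout is the uniform-in-$n$ control of the Leibniz remainder, and that is where the submultiplicativity estimate of Lemma\,\ref{PiPr} together with the defining bound \eqref{RRap} of $\mathfrak{R}$-approximate units does the work; everything else is bookkeeping with transposes and tensor products as in the proof of Theorem\,\ref{thm2}.
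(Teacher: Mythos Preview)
Your overall strategy---split via the Leibniz rule, control the remainder with $\mathfrak R$-estimates through Lemma~\ref{PiPr}, and invoke the sequential characterization of integrability---is exactly the paper's. The paper, however, carries it out in $\mathbb R^{2d}$: it writes $P(-D_x)(\p_n\varphi^{\triangle})=\p_n\,P(-D_x)\varphi^{\triangle}+\nu_n$ for $(\p_n)\in\overline{\mathbb U}^{\{M_p\}}_{2d}$, observes that $\nu_n$ is supported in the strip $\{x+y\in\su\varphi\}$, replaces $S\otimes T$ by $(S\otimes T)\theta^{\triangle}\in\mathcal D'^{\{M_p\}}_{L^1,2d}$, and then proves the uniform bound $\sup_n\|\nu_n\|_{(t_p)}<\infty$ so that $(\overline\p_n+\nu_n)\in\overline{\mathbb U}^{\{M_p\}}_{2d}$; Theorem~\ref{thI} then forces $\langle S\otimes T,\nu_n\rangle\to 0$.

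Your $d$-dimensional reformulation has a genuine gap at precisely this point. The remainder $\rho_n={}^{t}\!P(D)(\p_n\psi)-\p_n\,{}^{t}\!P(D)\psi$ with $\psi=\check T\ast\varphi$ is \emph{not} supported near $\su\varphi$: the support of $\psi$ is $\su\check T+\su\varphi$, in general all of $\mathbb R^d$, so $\su\rho_n\subset\su\p_n$ grows with $n$. Hence the step ``$S$ acts continuously on $\mathcal D^{\{M_p\}}_{K,d}$ for $K$ a neighbourhood of $\su\varphi$'' is inapplicable, and local continuity of $S$ alone cannot bound $\langle S,\rho_n\rangle$. The phrase ``controlled by integrability'' points in the right direction, but nothing you have in $d$ variables is integrable: $S$ is not, and you do not yet know that $(P(D)S)(\check T\ast\varphi)$ is. What rescues the argument is to push $\rho_n$ back into $2d$ variables: writing $\rho_n(x)=\sum_{i\neq 0}D^i\p_n(x)\,(\check T\ast\eta_i)(x)$ with $\eta_i\in\mathcal D^{\{M_p\}}_d$ supported in $\su\varphi$, one obtains $\langle S,\rho_n\rangle_d=\langle S\otimes T,\mu_n\rangle_{2d}$ for a function $\mu_n(x,y)=\sum_{i\neq 0}D^i\p_n(x)\,\eta_i(x+y)$ supported in the strip, and now the integrability of $(S\otimes T)\theta^{\triangle}$ together with the $\mathfrak R$-estimate on $\mu_n$ does the job. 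But that is the paper's $2d$ argument; the purely $d$-dimensional mechanism you describe does not close.
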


\begin{proof}
 Assume that $S, T \in \mdd_d$ are convolvable.
 By the definitions of $P(D)$ and $S{\ast}T$ and by Theorem\,\ref{thI}, we have
\begin{eqnarray}
 \langle P(D)(S\ast T), \varphi\rangle &=& \langle S\ast T, P(-D)\varphi\rangle =
 \langle(S\otimes T)(P(-D_x)\varphi)^{\triangle}, 1_{2d}\rangle \nonumber \\
 &=& \lim_{n\rightarrow\infty}\langle S\otimes T, \p_n\left[P(-D_x)\varphi^{\triangle}\right]\rangle \label{PDlim1}
\end{eqnarray}
 for all $(\p_n)\in \overline{\mathbb U}^{\{M_p\}}_{2d}$ and  $\varphi\in \md_d$.

 We are going to prove only that the ultradistributions $P(D)S$ and $T$ are convolvable in $\mdd_d$
 and the first equality of (\ref{PDST}) holds, because the remaining part of the assertion follows then
 directly from Remark\,\ref{comut}.
  To prove the convolvability of $P(D)S$ and $T$ we have to show that the sequence
 $\left(\langle P(D)S\otimes T, \p_n\varphi^{\triangle}\rangle\right)_{n\in\mathbb{N}}$ is convergent.

 We have
 \begin{eqnarray}
 \langle P(D)S\otimes T, \p_n\varphi^{\triangle}\rangle &=& \hspace{-.2cm}
 \sum_{\alpha\in\mathbb{N}_0^d}\!c_{\alpha}\langle D_x^{\alpha}S\otimes T, \p_n\varphi^{\triangle}\rangle
 = \hspace{-.2cm} \sum_{\alpha\in\mathbb{N}_0^d}\!c_{\alpha}\langle S\otimes T, - D_x^{\alpha}\left(\p_n\varphi^{\triangle}\right)\rangle
   \nonumber \\ &=&
   \langle  S\otimes T, P(-D_x)\left(\p_n\varphi^{\triangle}\right)\rangle \label{PDlim2}
 \end{eqnarray}
 for all $n\in\mathbb{N}$, by (\ref{PD})
  and the absolute convergence of the respective series.

  Comparing the last terms in (\ref{PDlim1}) and (\ref{PDlim2}), we see that the declared assertion
 will be proved if we show the equalities
 \begin{equation}\label{PDni}
 P(-D_x)\left(\p_n\varphi^{\triangle}\right) = \p_nP(-D_x)\varphi^{\triangle} + \nu_n
 \end{equation}
 on $\mathbb{R}^{2d}$ for all $n\in\mathbb{N}$, where
 $\nu_n$ are certain functions in $\md_d$ such that
\begin{equation}\label{STni}
\lim_{n\to \infty}\langle S\otimes T, \nu_n\rangle = 0.
\end{equation}

 Applying (\ref{PD}) and  Leibniz' rule and then changing the order of summation, we get,
 for all $n\in\mathbb{N}$, the equalities
 \begin{eqnarray*}
 P(-D_x)\left(\p_n\varphi^{\triangle}\right) &=&\hspace{-.2cm}
\sum_{\alpha\in\mathbb{N}^d_0}(-1)^{|\alpha|}c_{\alpha}\sum_{i\leq\alpha}{\alpha \choose i}
(D_x^i\p_n)\,(D^{\alpha-i}\varphi)^{\triangle}  \\
&=&\hspace{-.2cm}\sum_{i\in\mathbb{N}^d_0}\hspace{-.1cm}D_x^i\p_n\hspace{-.1cm}\sum_{\beta\in\mathbb{N}^d_0}\hspace{-.1cm}
(-1)^{|\beta+i|}{\beta+i \choose i}c_{\beta+i}(D^{\beta}\varphi)^{\triangle}  \\
&=& \p_nP(-D_x)\varphi^{\triangle} + \nu_n
 \end{eqnarray*}
on $\mathbb{R}^{2d}$ with the functions $\nu_n$ defined for $x, y\in\mathbb{R}^d$ by
\begin{equation}\label{nin}
\nu_n(x, y) := \sum_{i\in\mathcal{N}}\hspace{-.1cm}D_x^i\p_n(x, y)\hspace{-.1cm}\sum_{\beta\in\mathbb{N}^d_0}\hspace{-.1cm}
(-1)^{|\beta+i|}{\beta+i \choose i}c_{\beta+i}D_x^{\beta}\varphi(x + y),
\end{equation}
 where ${\mathcal{N}}:=\mathbb{N}^d_0\setminus\{(0, \ldots, 0)\}$.
 This means equations (\ref{PDni}) hold for $\nu_n$ defined in (\ref{nin}) and for all $n\in\mathbb{N}$.
 Clearly, $(\nu_n)$ depends on the initial sequence $(\p_n)$.

 It suffices to show (\ref{STni}).
Choose $\theta\in \md_d$ such that $\theta(x)=1$ for $x\in\su \varphi$. By (\ref{nin}), we have
\[
\langle S\otimes T, \nu_n\rangle = \langle (S\otimes T)\theta^{\triangle}, \nu_n\rangle, \quad n\in\mathbb{N}.
\]
Due to assumption that  $S$ and $T$ are convolvable in $\mdd_d$, the sequence $\left(\langle (S\otimes T)\theta^{\triangle}, \overline{\p}_n\rangle\right)_{n\in\mathbb{N}}$ is  convergent
for every $(\overline{\p}_n)\in\overline{\mathbb U}^{\{M_p\}}_{2d}$.
To prove (\ref{STni}) it is enough to show that also
$(\overline{\p}_n+\nu_n)\in\overline{\mathbb U}^{\{M_p\}}_{2d}$.
 Since $(\p_n), (\overline{\p}_n)\in\overline{\mathbb U}^{\{M_p\}}_{2d}$, for each
 compact set $K$ in $\mathbb{R}^{2d}$ there exists an $n_0\in\mathbb{N}$ such that
 $D^i\p_n(x, y)=0$ and $\overline{\p}_n(x, y)=1$ for $(x, y)\in K, i\in\mathcal{N}$ and $n>n_0$.
 Consequently, in view of (\ref{nin}),
 \[
 \overline{\p}_n(x, y)+\nu_n(x, y)=1 \qquad \mbox{for} \; (x, y)\in K, \;  \; n>n_0.
 \]
 Therefore it remains to prove, for every $(t_{\!p})\in\mathfrak{R}$, that
 \begin{equation}\label{Crp}
 \sup_{n\in\mathbb{N}}\|\nu_n\|_{(t_{\!p})}<\infty,
 \end{equation}
  since  $(\overline{\p}_n)\in\overline{\mathbb U}^{\{M_p\}}_{2d}$ and (\ref{Crp}) implies
 $ \sup_{n\in\mathbb{N}}\|\overline{\p}_n + \nu_n\|_{(t_{\!p})}<\infty$.

 Fix an arbitrary $(t_{\!p})\in\mathfrak{R}$.
The coefficients of the ultradifferential operator $P(D)$ satisfy (\ref{ckPDu})
for some $(u_{\!p})\in\mathfrak{R}$. Putting $s_{\!k}:=\min\{t_{\!k}, u_{\!k}\}$ for $k\in\mathbb{N}$, we have $(s_{\!p})\in\mathfrak{R}$.
By Lemma\,\ref{PiPr}, there exists a sequence $(r_{\!p})\in\mathfrak{R}$ such that $r_{\!k}\leq s_{\!k}$
and inequality (\ref{inPP}) holds. In addition we assume,  according to Remark\,\ref{rem0rn}, that
  \begin{equation}\label{rp2H}
 r_{\!p}>{16H^2} \quad \mbox{for} \quad p\in\mathbb{N},
  \end{equation}
 where $H>\frac{1}{4}$ is a constant from condition (M.2).

 Let $\alpha\in\mathbb{N}^d_0$ be arbitrarily fixed. For fixed $n\in\mathbb{N}$ and $(r_{\!p})\in\mathfrak{R}$ chosen
 above, according to the representation (\ref{nin}), we have
 \begin{eqnarray}
 \frac{\left\|D_x^{\alpha}\nu_n\right\|_{\infty}}{R_{|\alpha|}M_{\alpha}}&\leq&
  \sum_{i\in\mathcal{N}}
     \sum_{j\leq\alpha}{\alpha \choose j}\frac{\|D_x^{\alpha+i-j}\p_n\|_{\infty}}{R_{|\alpha+i-j|}M_{\alpha+i-j}}
     \sum_{\beta\in\mathbb{N}^d_0}{\beta+i \choose i}|c_{\beta+i}| \nonumber \\
   &&\cdot\frac{\|D_x^{\beta+j}\varphi^{\triangle}\|_{\infty}}{R_{|\beta+j|}M_{\beta+j}}\cdot\!
     \frac{R_{|\alpha+i-j|}R_{|\beta+j|}}{R_{|\alpha|}}\cdot\!\frac{M_{\alpha+i-j}M_{\beta+j}}{M_{\alpha}}. \label{noni}
 \end{eqnarray}

 Applying  properties (\ref{Rpq}) and (\ref{inPP}) (twice) of the sequence $(r_{\!p})$ and
  property (\ref{Mpq}) and condition (M.2) (twice) of the sequence $(M_p)$, we get
  \begin{equation}\label{R2R}
 \frac{R_{|\alpha+i-j|}R_{|\beta+j|}}{R_{|\alpha|}R_{|\beta|}R_{|i|}}\leq
 \frac{2^{|\beta+j|}R_{|\alpha+i|}}{R_{|\alpha|}R_{|i|}}\leq 2^{|\alpha+i|}2^{|\beta+j|}
  \end{equation}
and \begin{equation}\label{MHM}
 \frac{M_{\alpha+i-j}M_{\beta+j}}{M_{\alpha}M_{\beta}M_i}\leq \frac{AH^{|\beta|+|j|}M_{\alpha+i}}{M_{\alpha}M_i}
 \leq A^2H^{|\alpha+i|}H^{|\beta+j|}.
 \end{equation}
 Moreover, we have
  \begin{equation}\label{cBei}
 |c_{\beta+i}| \leq \frac{C}{U_{|\beta|}U_{|i|}\,M_{\beta}M_i}
 \qquad \mbox{and} \qquad
 \frac{R_{|\beta|}R_{|i|}}{U_{|\beta|}U_{|i|}}\leq 1,
  \end{equation}
 by (\ref{ckPDu}), properties (\ref{Mpq}) of $(M_p)$ and (\ref{Rpq}) of $(u_{\!p})\in\mathfrak{R}$
 and because $r_{\!k}\leq u_{\!k}$ for $k\in\mathbb{N}$.
 The inequalities in (\ref{R2R}), (\ref{MHM}) and (\ref{cBei}) hold for arbitrary $\alpha, \beta, i, j\in\mathbb{N}_0^d$
 such that $j\leq\alpha$ and will be used later together with the following known estimate:
  \begin{equation}\label{Bpoi}
{\beta+i \choose i}\leq 2^{|\beta+i|},
   \end{equation}
 for a certain $B>0$ and all $\beta, i\in\mathbb{N}_0^d$.

 It follows from (\ref{noni}), due to (\ref{R2R})-(\ref{Bpoi}), that
 \begin{eqnarray}
 \frac{\left\|D_x^{\alpha}\nu_n\right\|_{\infty}}{R_{|\alpha|}M_{\alpha}}&\leq&
 A^2C\sum_{i\in\mathcal{N}}
     \sum_{j\leq\alpha}{\alpha \choose j}(2H)^{|\alpha+i|}\frac{\|D_x^{\alpha+i-j}\p_n\|_{\infty}}{R_{|\alpha+i-j|}M_{\alpha+i-j}}
    \nonumber \\
 &&\cdot\sum_{\beta\in\mathbb{N}^d_0}2^{|\beta+i|}(2H)^{|\beta+j|}
   \frac{\|D_x^{\beta+j}\varphi^{\triangle}\|_{\infty}}{R_{|\beta+j|}M_{\beta+j}}.  \label{nonial}
   \end{eqnarray}

 According to assumption (\ref{rp2H}), consider
 the sequences $(\overline{r}_{\!p})$ and $(\overline{\overline{r}}_{\!p})$ of the class $\mathfrak{R}$ defined by
 $\overline{r}_{\!p}:=r_{\!p}/{8H}$ and $\overline{\overline{r}}_{\!p}:=r_{\!p}/{16H^2}$, respectively, for $p\in\mathbb{N}$.
 Clearly,
 \[
 2^{2|\alpha+i-j|}(2H)^{|\alpha+i-j|}\frac{\|D_x^{\alpha+i-j}\p_n\|_{\infty}}{R_{|\alpha+i-j|}M_{\alpha+i-j}}
 \leq \|\p_n\|_{(\overline{r}_{\!p})}
 \]
 and
 \[
 2^{2|\beta+j|}(2H)^{2|\beta+j|}\frac{\|D_x^{\beta+j}\varphi^{\triangle}\|_{\infty}}{R_{|\beta+j|}M_{\beta+j}}
 \leq \|\varphi\|_{(\overline{\overline{r}}_{\!p})}
 \]
 for all $\alpha, \beta, i, j\in\mathbb{N}_0^d$, $j\leq\alpha$. We deduce from (\ref{nonial}) and the
 above estimates that
 \[
 \frac{\left\|D_x^{\alpha}\nu_n\right\|_{\infty}}{R_{|\alpha|}M_{\alpha}} \leq
 \frac{A^2C}{2^{\alpha}}\|\p_n\|_{(\overline{r}_{\!p})}\|\varphi\|_{(\overline{\overline{r}}_{\!p})}
 \cdot
 \sum_{i\in\mathcal{N}}\left(\frac{1}{2}\right)^i
 \sum_{\beta\in\mathbb{N}^d_0}\left(\frac{1}{4H}\right)^{\beta}<\infty.
 \]
for arbitrary $\alpha \in\mathbb{N}_0^d$ and $n\in\mathbb{N}$. Hence
 \[
\sup_{n\in\mathbb{N}}\|\nu_n\|_{(r_{\!p})} = \sup_{n\in\mathbb{N}}
\sup_{\alpha\in\mathbb{N}_0^d}\frac{\left\|D_x^{\alpha}\nu_n\right\|_{\infty}}{R_{|\alpha|}M_{\alpha}}<\infty
\]
and, since  $r_{\!p}\leq t_{\!p}$ for $p\in\mathbb{N}$,
\[\sup_{n\in\mathbb{N}}\|\nu_n\|_{(t_{\!p})} \leq
\sup_{n\in\mathbb{N}}\|\nu_n\|_{(r_{\!p})}<\infty,
\]
i. e. (\ref{Crp}) is proved, as required. The assertion of Theorem\,\ref{thm3} is proved.
\end{proof}

Theorem\,\ref{thm3} has also been
shown in the quasianalytic case in the forthcoming article
\cite{PPV}. The proof there is via a completely different method (cf. \cite{PPV}, Cor.\,5.10).

\section*{Acknowledgements}

This work was partly supported by the Center for
Innovation and Transfer of Natural Sciences and Engineering
Knowledge of University of Rzesz\'ow.

  \smallskip

\end{document}